\def\pone#1{\mu_{\ref{fig:ic}#1}}
\def\pthree#1{\mu_{\ref{fig:p3ic}#1}}
\def\interior{\text{int}}
\def\orbit{\text{orbit}}
\def\gr{}
\def\INT{_\text{Merge}}
\def\AP{{\mathcal A}_P}
\def\AC{{\mathcal A}_C}
\def\ALP{{\mathcal A}_{AP}}
\def\cT{{\mathcal T}}
\newcommand{\bigletter}[1]{\mathlarger{#1}}
\newcommand{\bigd}{\bigletter{\vdots}}
\newcommand{\bigO}{\bigletter{N}}
\renewcommand*{\VertexBallColor}{green!60}
\renewcommand*{\VertexTextColor}{black}
\tikzset{
 LabelStyle/.style = { rectangle, 
            rounded corners, 
            draw, 
            minimum width = 10em, 
            fill = yellow!50, 
            text = black, },
 VertexStyle/.style = { inner sep=3pt, 
             minimum size = 5pt,
             shape     = \VertexShape,
             ball color   = \VertexBallColor,
             color     = \VertexLineColor,
             inner sep   = \VertexInnerSep,
             outer sep   = 0.5\pgflinewidth,
             text      = \VertexTextColor,
             minimum size  = \VertexSmallMinSize,
             line width   = \VertexLineWidth},
 EdgeStyle/.append style = {->, 
               double=yellow, 
               color=orange}
}
\newcommand{\subscript}[2]{$#1 _ #2$}
\newcommand{\inter}[1]{%
 {\kern0pt#1}^{\mathrm{o}}%
}
\newcommand{\BF}{\boldmath}
\definecolor{jim}{rgb}{1,.4,0}
\definecolor{jim2}{rgb}{0,.6,0}
\newcommand{\allblack}{\color{black}{}}
\newcommand{\ie}{{\it{i.e.}}}
\newcommand{\eg}{{\it{e.g.}}}
\def\J{J}
\def\Jall{\J_{all}}
\def\Jall{J^{int}}
\newcommand{\lmu}{\ell_\mu}
\newlength{\dhatheight}
\newcommand{\doublehat}[1]{%
    \settoheight{\dhatheight}{\ensuremath{\hat{#1}}}%
    \addtolength{\dhatheight}{-0.35ex}%
    \hat{\vphantom{\rule{1pt}{\dhatheight}}%
    \smash{\hat{#1}}}}
\newcommand{\beq}{\begin{linenomath}\begin{equation}} 
\newcommand{\eeq}{\end{equation}\end{linenomath}} 
\def\phi{\varphi}
\def\bZ{\mathbb{Z}}
\def\cA{{\mathcal A}}
\def\cO{{\mathcal O}}
\def\cR{{\mathcal R}}
\def\cS{{\mathcal S}}
\newtheorem{definition}{Definition}[section]
\newtheorem{theorem}{Theorem}[section]
\newtheorem*{theoremA}{Attractor Theorem}
\newtheorem*{theoremN}{Discrete Conley Theorem}
\newtheorem*{theoremW}{Non-Wandering Theorem}
\newtheorem*{theoremS}{Singer Theorem}
\newtheorem*{theoremH}{Homterval Theorem}
\newtheorem{plc}{Plc}[section]
\newtheorem{corollary}[plc]{Corollary}
\newtheorem{remark}{Remark}[section]
\newtheorem{proposition}[plc]{Proposition}
\title{
 The \gr graph of the logistic map is a tower
}
\author{Roberto De Leo$^1$ and James A. Yorke$^2$}
\address{$^1$ Department of Mathematics, Howard University, Washington DC 20059, USA}
\address{$^2$ Institute for Physical Science and Technology and the Departments of Mathematics and Physics, University of Maryland College Park, MD 20742, USA}
\email{roberto.deleo@howard.edu}
\email{yorke@umd.edu}
\pgfplotsset{compat=1.14} 
\begin{document}
\maketitle

\vspace{1cm}
\centerline{\em In memory of Todd A. Drumm (1961-2020) and Tien-Yien Li (1945-2020).}
\vspace{1.5cm}

\begin{abstract}
 The qualitative behavior of a dynamical system can be encoded in a graph.
 Each node of the graph is an equivalence class of chain-recurrent points and 
 there is an edge from node $A$ to node $B$ if, using arbitrary small perturbations, a trajectory starting from any point of $A$ can be steered to any point of $B$. 
 In this article we describe the graph of the logistic map. 
 Our main result is that the graph is always a tower, namely there is an edge connecting each pair of distinct nodes. Notice that these graphs never contain cycles.
 If there is an edge from node $A$ to node $B$, the unstable manifold of some periodic orbit in $A$ contains points that eventually map onto $B$.
 For special parameter values, this tower has infinitely many nodes. 
\end{abstract}
\section{Introduction}
Ever since H. Poincar\'e invigorated the field of qualitative Dynamical Systems in late 1800s, one of its main goals has been understanding the qualitative asymptotic behavior of points under a continuous or discrete time evolution. In this article we study some fundamental qualitative aspect of the dynamics of the logistic map $\ell_\mu(x)=\mu x(1-x)$ and we represent our results into a graph. 

The idea of describing the asymptotics of points in a dynamical system through a graph goes back at least to S.~Smale. 
In Sixites, he observed~\cite{Sma61,Sma67} that, as a byproduct of Morse theory, the flow of the gradient vector field of a Morse function $f$ on a compact manifold $M$ can be encoded into a graph:
1. Its non-wandering set $\Omega_f\subset M$ consists in just a finite number of fixed points $\Omega_i$, $i=0,\dots,p$; these are the nodes of the graph. 2. The dynamics outside $\Omega_f$ consists just of orbits asymptotic to a fixed point $\Omega_i$ for $t\to\infty$ and a different fixed point $\Omega_j$ for $t\to-\infty$; in this case, we say that there is an edge from $\Omega_j$ to $\Omega_i$.

Smale also showed that this idea also applies to the more general case of {\em Axiom-A} diffeomorphisms on compact manifolds if we just replace {\em fixed points} with {\em closed disjoint invariant indecomposable subsets} $\Omega_i$, on each of which $f$ is topologically transitive. Recall that, for a Axiom-A diffeomorphism $f$, the non-wandering set $\Omega_f$ is hyperbolic and the set of periodic points is dense in it.
In this case, the nodes of the graph are the $\Omega_i$ and there is an edge from $\Omega_i$ to $\Omega_j$ if and only if the intersection of the stable manifold of $\Omega_i$ with the unstable manifold of $\Omega_j$ is non-empty. Important examples of Axiom-A diffeomorphisms are Morse-Smale diffeomorphisms (those whose nonwandering set consists in a finite number of hyperbolic periodic orbits) and Anosov diffeomorphisms (those for which the whole manifold is hyperbolic).


Charles Conley extended this idea so that it can be applied to any kind of discrete or continuous dynamical system~\cite{Con78}. 
One of his key ideas was replacing the non-wandering set by the larger set of chain-recurrent points. 
Chain-recurrent points can be sorted into closed disjoint invariant sets $N_i$ and Conley was able to prove that the dynamics outside of the $N_i$ is gradient-like, namely every trajectory of points outside the chain-recurrent set represents an edge from some $N_i$ to some $N_j$. 

The first to study the non-wandering set of the logistic map were possibly Smale and Williams~\cite{SW76}. 
They studied the particular case $\mu=3.83$, in which case there is a period-3 orbit attractor, and showed that the non-wandering set is given by the union of the attractor, the fixed repelling point $0$ and a Cantor set on which the map acts as a subshift of finite type. 
This invariant Cantor set is topologically invariant for $\mu$ in the period-3 window (in 
Figs.~\ref{fig:full} and~\ref{fig:p3} we show that set painted in red). 
A few years later, a complete description of the structure of the non-wandering set was given by Jonker and Rand for unimodal maps~\cite{JR80} and by van Strien for S-unimodal maps~\cite{vS81}, which includes the case of the logistic map.

Although the structure of the non-wandering set has been known for forty years, no one so far has described the graph of the logistic map. 
The main goal of this article is to provide such a description.
We achieve this by first studying the structure of the chain-recurrent set of the logistic map $\ell_\mu$.
Each node $N$ has a periodic point $p_1(N)$ that is closest point of the node to the critical point $c=0.5$.
We denote by $\rho(N)$ that minimum distance and
say that nodes that have larger $\rho(N)$ values are ``higher'' than nodes with lower values. 
We denote by $J_1(N)$ the interval whose endpoints are $p_1(N)$ and $q_1=1-p_1(N)$ and show that $J_1(N)$ maps into itself under some positive power $k$ of $\ell_\mu$. 
This interval $J_1(N)$ is the first one of a cycle of intervals $\cT(N)=\{J_1(N),\ell_\mu(J_1(N)),\dots,\ell^{k-1}_\mu(J_1(N))\}$, invariant under $\ell_\mu$ and containing the attractor. We call $\cT$ a ``cyclic trapping region''. 
Some example of the intervals $J_1$ associated to different nodes are shown in Figs.~\ref{fig:ic2} and~\ref{fig:p3icb}.

Each cyclic trapping region $\cT$ is accompanied by two structures: (1) the periodic orbit of the point $p_1$; (2) the node $N$ containing $p_1$.
Notice that no points of $N$ are in the interior of the trapping region.
Using the pairing between trapping regions and nodes, we ultimately show that there is an edge of the graph between each pair of nodes, from the higher to the lower. 

We call such a graph a tower. 
The node at the bottom of a tower is attracting, all other ones are repelling. 
In a tower, arbitrarily close to each node $N$, for each lower node $N'$, there are points falling eventually on $N'$.
There are values of $\mu$ for which the tower has infinitely many levels. Examples of tower graphs of the logistic map are shown in Fig.~\ref{fig:full} and~\ref{fig:p3}.

The paper is structured as follows. All our results are contained in Sec.~3.2 and the reader is recommended to start reading the article from there and to use the rest as needed. 
In order to keep the article self-consistent we include in Sec.~2, all definitions and properties we use about chain-recurrence and the graph of a dynamical system and, in Sec.~3.1, the main theorems in literature we use to prove our own results of Sec.~3.2.
\section{Chain-Recurrence and the \gr graph}
\label{sec:cr}
%
Throughout this paper, by dynamical system we mean a continuous map $\Phi:X\to X$ on a compact metric space $(X,d)$.
The orbit of a point $x\in X$ under $\Phi$ is the set $\{x,\Phi(x),\Phi^2(x),\dots\}$, where $\Phi^n=\underbrace{\Phi\circ\dots\circ\Phi}_{\text{$n$ times}}$.
%
%

In this article, we discuss extensively two classes of points, which we define below: non-wandering points and chain-recurrent points.

{\bf Definition of non-wandering point.} A point $x\in X$ is a {\bf non-wandering point} for $\Phi$ if, for every neighborhood $U$ of $x$, there is a $n\geq1$ such that $U\cap\Phi^n(U)\neq\emptyset$.
We denote by $\Omega_\Phi$ the set of all non-wandering points of $\Phi$.

\bigskip
Notice that every point of a period-$k$ orbit, that is, every fixed point of $\Phi^k$, is trivially a non-wandering point.

{\bf Definition of chain-recurrence.}
    Given two points $p,q\in X$ and an $\varepsilon>0$, we say that there is a {\bf\BF$\varepsilon$-chain}~\cite{Bow75} from $p$ to $q$ if there is a finite sequence of points $p=x_0,x_1,\dots,x_n=q$ on $X$ such that, for $i=0,\dots,n-1$,
    \beq
    d(\Phi(x_i),x_{i+i})\le\varepsilon.
    \label{chain}
    \eeq
%
%
    We say that {\bf \BF $q$ is downstream from $p$} if, for every $\varepsilon> 0$, there is a $\varepsilon$-chain from $p$ to $q$; equivalently, we say that {\bf \BF $p$ is upstream from $q$}. 
    We write {\BF$p\sim q$} if $p$ is upstream and downstream from $q$, and we say that {\bf \BF$p$ is chain recurrent} if $p\sim p$. 
    
    We denote by {\BF${\mathcal R}_\Phi$} the {\bf chain-recurrent set}~\cite{Con78}, \ie\ the set of all chain-recurrent points of $\Phi$. 
    We call each equivalence class in ${\mathcal R}_\Phi$ a {\bf node}. Hence, if $x$ is in a node $N$, then $y\in N$ if and only if $x\sim y$.
    
    \bigskip
%
If a point is non-wandering, it is certainly chain-recurrent. However, the following example shows that not all chain-recurrent points are non-wandering.
%
%

    Let $\Psi$
    map $[0,2\pi]$ into itself such that $\Psi(0)=0$; $\Psi(2\pi)=2\pi$; and $\alpha<\Psi(\alpha)<2\pi$ for $\alpha\in(0,2\pi)$.
    We identify 0 with $2\pi$ to make $\Psi$ a map from the circle to itself. 
    Then $\lim_{k\to\infty}\Psi^k(\alpha)=0$ for all angles $\alpha$ and $\Omega_\Psi=\{0\}$.
    
    In contrast, ${\mathcal R}_\Psi$ is the whole circle and so it is strictly larger than $\Omega_\Psi$.
    Indeed, notice that every point $\alpha_0$ of the circle converges under $\Psi$ to 0 both forward and backwards. 
    Hence, for every $\varepsilon>0$, there is a piece of a trajectory $x_0,\dots,x_n$ such that $x_0\in(0,\varepsilon/2)$, $x_i=\alpha_0$ for some $0\leq i\leq n$ and $x_n\in(2\pi-\varepsilon/2,2\pi)$. 
    Then, $x_i,x_{i+1},\dots,x_n,x_0,\dots,x_{i-1},x_i$ is a $\varepsilon$-chain from $\alpha_0$ to itself, \ie\ every point of the circle is chain-recurrent.
\begin{figure}
 \centering
 \includegraphics[width=14cm]{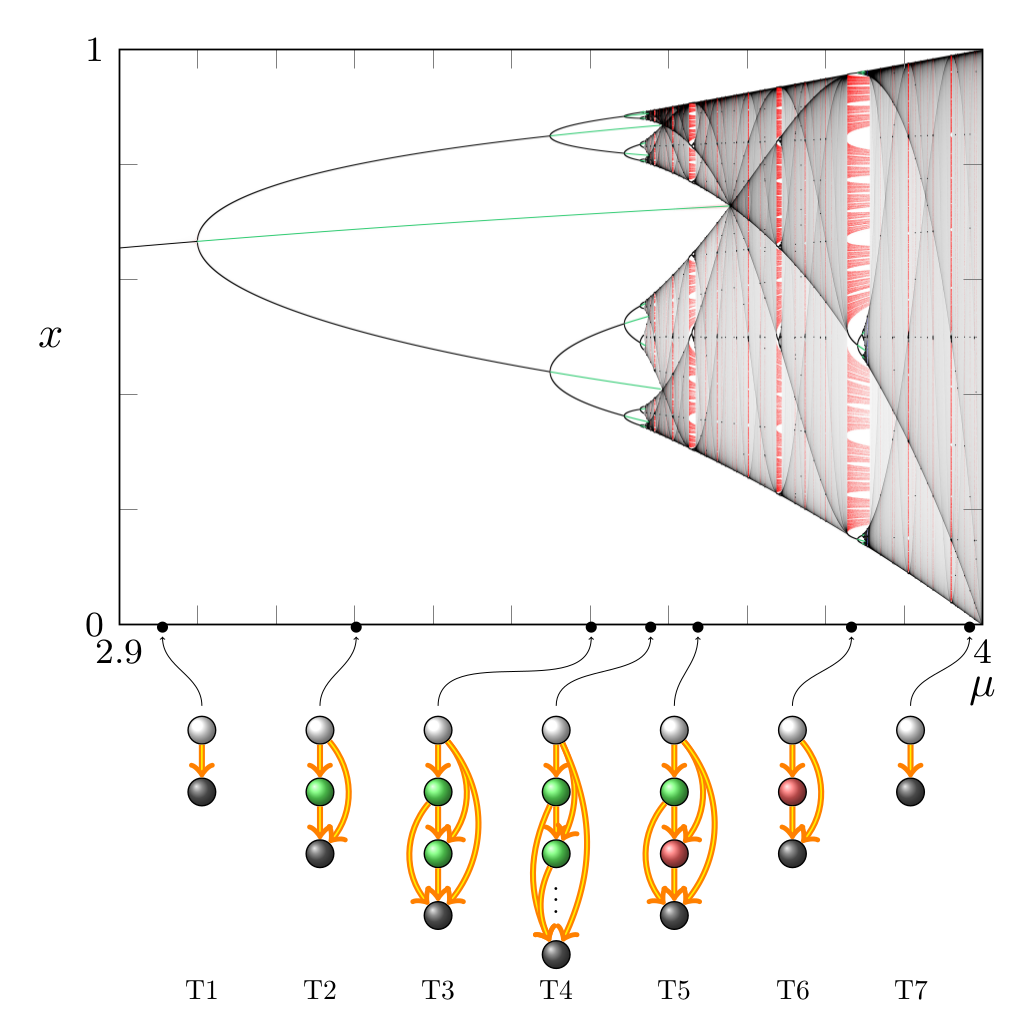}
  \caption{{\bf Bifurcations diagram and sample graphs of the logistic map.} 
  For each value of $\mu$, the attracting set is painted in shades of gray, depending on the density of the attractor, repelling periodic orbits in green, and repelling Cantor sets in red. 
  The ``black dots'' that are visible within the diagram are low-period periodic points. 
  They signal the presence of bifurcation cascades within some window. 
  The fact that some of them keep close to the line $x=c$ is a reflection of Singer's Theorem: when the attractor is a periodic orbit, $c$ belongs to its immediate basin. 
  For selected values of $\mu$ we also show, below the diagram, the graph of the corresponding logistic map.}
  \label{fig:full}
\end{figure}

\begin{figure}
 \centering
 \includegraphics[width=14.4cm]{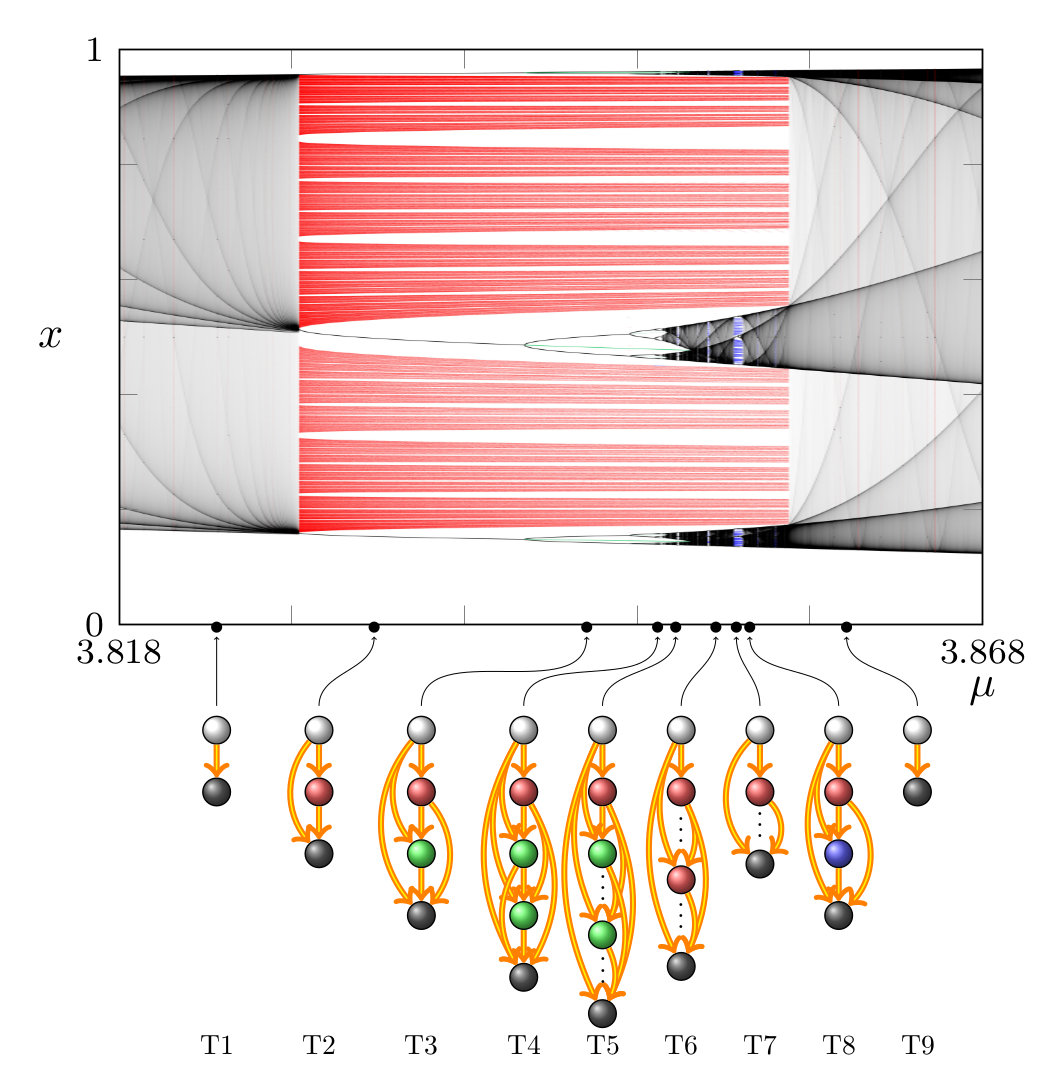}
 \caption{{\bf Towers of nodes in the period-3 window of the logistic map.}
    The period-3 window $W$ starts at $\mu\simeq3.8284$ and ends at $\mu\simeq3.8568$.
  At every $\mu$ within $W$, an invariant Cantor set node $N$ (in red in figure) arises, depending continuously on $\mu$. This is exactly the Cantor set discussed by Smale and Williams in~\cite{SW76} at $\mu=3.83$. 
  The widest white interval within the Cantor set is the $J_1(N)$ interval of the period-3 trapping region $\cT(N)$ running throughout $W$. 
  The periodic endpoint $p_1(N)$ of $J_1(N)$ is the top endpoint, the bottom one is $q_1=1-p_1(N)$.
  The blue Cantor set node $N'$ visible in figure is the node of a regular trapping region $\cT(N')$ nested in $\cT(N)$.
  As in Fig.~\ref{fig:full}, below the diagram we show the graph of the logistic map for selected values of $\mu$.
}
 \label{fig:p3}
\end{figure} 


\begin{figure}
 \centering
 \includegraphics[width=14cm]{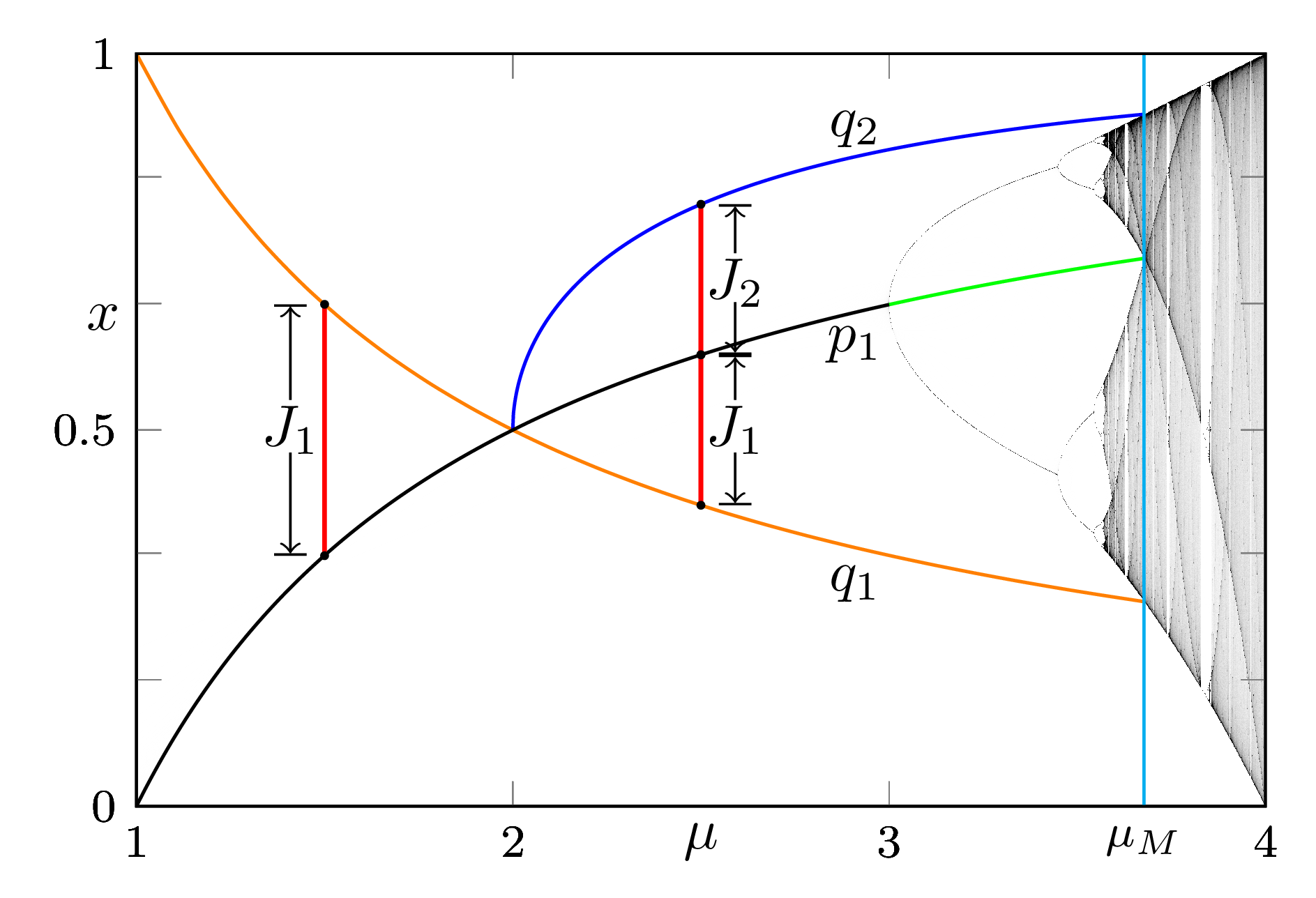}
 \caption{{\bf Flip and regular trapping regions associated to a periodic orbit node.} 
 The fixed point $p_1$ is a node for $1<\mu<\mu_M$ (see Eq.~\ref{Merge}). 
 Each node has its own $p_1$, $q_1$ and $J_1$.
 It is attracting for $\mu<3$.
 For $\mu<2$, the trapping region associated with it consists of a single interval $\J_1=[p_1,q_1]$, where $q_1=1-p_1$. 
 As $\mu$ increases past the super-stable value $\mu=2$, the trapping region becomes flip and consists of two intervals $\J_1=[q_1,p_1]$ and $\J_2=[p_1,q_2]$, with $\ell_\mu(q_2)=q_1$. The flip trapping region ends when $p_1$ hits the chaotic attractor at $\mu_M$
 (Eq.~\ref{Merge}).
  The region $3.4\leq\mu\leq3.6$ is shown in greater detail in Fig.~\ref{fig:ic}.
}
 \label{fig:ic2}
\end{figure} 

\begin{figure}
 \centering
 \includegraphics[width=14.4cm]{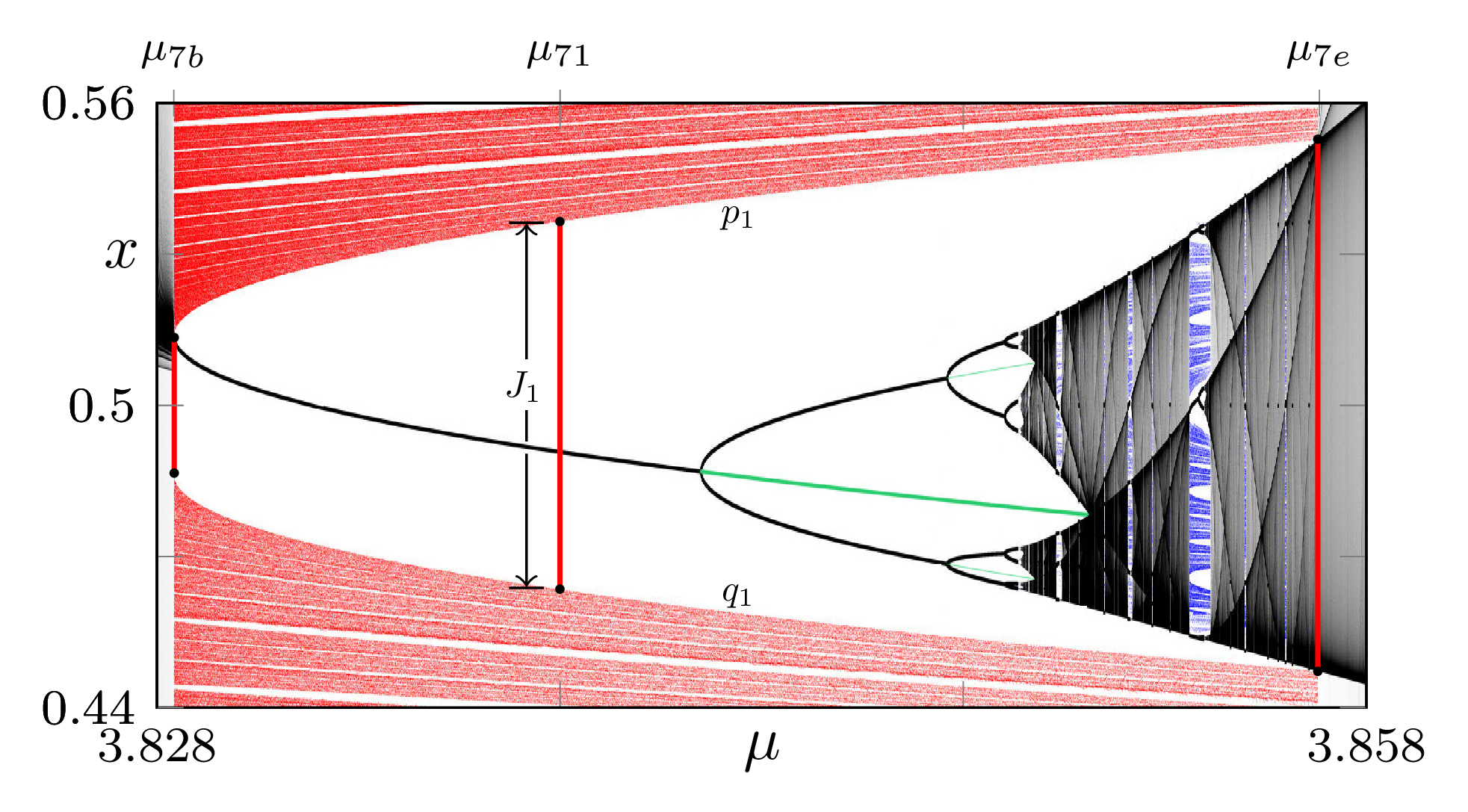}
 \caption{{\bf A trapping region associated to a Cantor set node.} 
    This window is a blowup of a region in Fig.~\ref{fig:p3}. 
    The red region is a Cantor set (for each $\mu$ in the window). 
    The Cantor set is a node. 
    The interval $J_1=[q_1,p_1]$ is a trapping region of $\ell_\mu^3$ for each $\mu$ in the window.
    Each node has a trapping region for $\ell_\mu$ and $J_1$ is the piece of the trapping region that contains the critical point. 
    For each $\mu$ in the interior of the window, the point $p_1$ belongs to a repelling period-3 orbit within the red Cantor set and $\ell_\mu(q_1)=p_1$. 
    There is a $\mu$ 
    Within $J_1$ arises a bifurcation diagram qualitatively identical to the full one. 
    The Cantor set node in blue within the period-3 window of the diagram inside $J_1$ is the analog of the red Cantor set within the main period-3 window. 
    Fig.~\ref{fig:p3ic} shows the same region but with more detail.
}
 \label{fig:p3icb}
\end{figure} 
{
%

\medskip
{\bf Definitions of trajectories.} For a map $\Phi$, we will say that the bi-infinite sequence $p_n$, $n\in\bZ$, is a {\bf trajectory} if $p_{n+1}= \Phi(p_n)$ for all $n\in \bZ$. 
Its forward (resp. backward) limit set {\BF$\omega(t)$} (resp. {\BF$\alpha(t)$}) is the set of the accumulation points of the trajectory for $n\to\infty$ (resp. $n\to-\infty$).
%
For some maps, the inverse is not unique. For the map $z\mapsto z^2$, each point other than $0$ has two inverses. 
Hence there will be infinitely many trajectories through a given $p_0 \ne 0.$ Two different trajectories through $p_0$ will have the same forward limit set but might have different backward limit sets.

%
\begin{proposition}
    For every trajectory $t$, either the trajectory lies entirely in a node or there are two distinct nodes $N_1,N_2$ such that $\alpha(t)\subset N_1$ and $\omega(t)\subset N_2$.
\end{proposition}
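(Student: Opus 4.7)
\medskip

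\noindent\textbf{Proof proposal.} The plan is to first show that both $\alpha(t)$ and $\omega(t)$ are each contained in a single node, and then handle the case of equal nodes by showing the whole trajectory must then lie in that node.

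Fix a trajectory $t=(p_n)_{n\in\bZ}$. The first step is to prove that every $q\in\omega(t)$ is chain-recurrent and, moreover, any two points $q,q'\in\omega(t)$ are chain-equivalent. For this, given $\varepsilon>0$, use continuity of $\Phi$ at $q$ to pick $\delta\in(0,\varepsilon)$ with $d(y,q)<\delta\Rightarrow d(\Phi(y),\Phi(q))<\varepsilon$, and choose indices $n_1<n_2$ (large) with $d(p_{n_1},q)<\delta$ and $d(p_{n_2},q')<\delta$. The chain $q,\,p_{n_1+1},p_{n_1+2},\dots,p_{n_2-1},q'$ is an $\varepsilon$-chain: the first jump has size $d(\Phi(q),\Phi(p_{n_1}))<\varepsilon$ by the choice of $\delta$, the intermediate jumps are zero (trajectory steps), and the last jump has size $d(p_{n_2},q')<\delta$. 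Swapping the roles of $q$ and $q'$ shows $q\sim q'$, so all of $\omega(t)$ lies in a single node $N_2$. Run the same argument with $n_1,n_2\to -\infty$ to get $\alpha(t)\subset N_1$ for a single node $N_1$.

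If $N_1\neq N_2$ we are in the second alternative of the statement and are done. Otherwise, set $N:=N_1=N_2$ and show that every $p_k$ belongs to $N$. For any $p\in\alpha(t)\subset N$, use the same continuity trick with some $n<k$ such that $d(p_n,p)<\delta$ to build an $\varepsilon$-chain $p,p_{n+1},\dots,p_k$ from $p$ to $p_k$; analogously, for any $q\in\omega(t)\subset N$, choose $n>k$ with $d(p_n,q)<\delta$ to build $p_k,p_{k+1},\dots,p_{n-1},q$ from $p_k$ to $q$. Thus $p_k$ is downstream from $p\in N$ and upstream from $q\in N$; concatenating these chains with chains internal to $N$ (which exist since $p\sim q$) yields an $\varepsilon$-chain from $p_k$ to itself and places $p_k$ in the same equivalence class as $N$, i.e.\ $p_k\in N$.

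The only real issue is bookkeeping in the $\varepsilon$-chain construction: the sole non-exact step at either end of each chain must be absorbed by the uniform modulus of continuity of $\Phi$, which is why $\delta$ has to be chosen before selecting the indices and why the chains are arranged to have exactly two approximate jumps (at the endpoints) with all intermediate jumps being genuine orbit steps. Nothing else in the argument is delicate; the proposition then follows by combining the three steps above.
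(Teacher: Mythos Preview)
The paper states this proposition without proof, treating it as a standard background fact from Conley's chain-recurrence theory; there is no argument in the paper to compare yours against. Your direct construction is the standard one and is correct: compactness of $X$ guarantees $\alpha(t)$ and $\omega(t)$ are nonempty, and your $\varepsilon$-chains are set up properly, with the one inexact jump at each end controlled by continuity of $\Phi$. The only cosmetic point is the degenerate case $n_2=n_1+1$ in the first chain (where the intermediate segment is empty), but since $q'\in\omega(t)$ gives infinitely many close return times you can always pick $n_2>n_1+1$, so this is harmless.
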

%
%
{\bf Definition of attractor.}  Assume $X$ is a measure space. Following Milnor~\cite{Mil85}, we say that a closed invariant set $A$ is an {\bf attractor} if it satisfies the following conditions:
    \begin{enumerate}
        \item the basin of attraction of $A$, namely the set of all $x\in X$ such that $\omega(x)\subset A$, has strictly positive measure;
        \item there is no strictly smaller invariant closed subset $A'\subset A$ whose basin differs from the basin of $A$ by just a zero-measure set.
    \end{enumerate}
    We call a node an {\bf attracting node} if it contains an attractor, otherwise we call it a {\bf repelling node}. 

\bigskip
    This definition of attractor is more appropriate for the logistic map than the more common definition that an attractor must attractor all points in some neighborhood. 
    
    For instance, The logistic map has a countable number of parameter values for which there is an attractor-repellor bifurcation at which a pair of periodic orbits is created. 
    At such points, the periodic orbit is attracting from one side and repelling from the other side. 
    Hence it is an attractor by our choice of definition. 
    Notice, moreover, that such an attractor-repellor orbit is a subset of an invariant Cantor set. We will show that this Cantor set is a node. Hence, by Milnor's definition, the node is an attracting node even though only part of it attracts.
    
    Similarly, there are parameter values where there are windows within windows infinitely deep, yielding a node which attracts almost every trajectory but whose basin does not contain an open set. Nonetheless, it is an attractor in the Milnor sense.
%

%
\begin{figure}
 \centering
 \includegraphics[width=5.5cm]{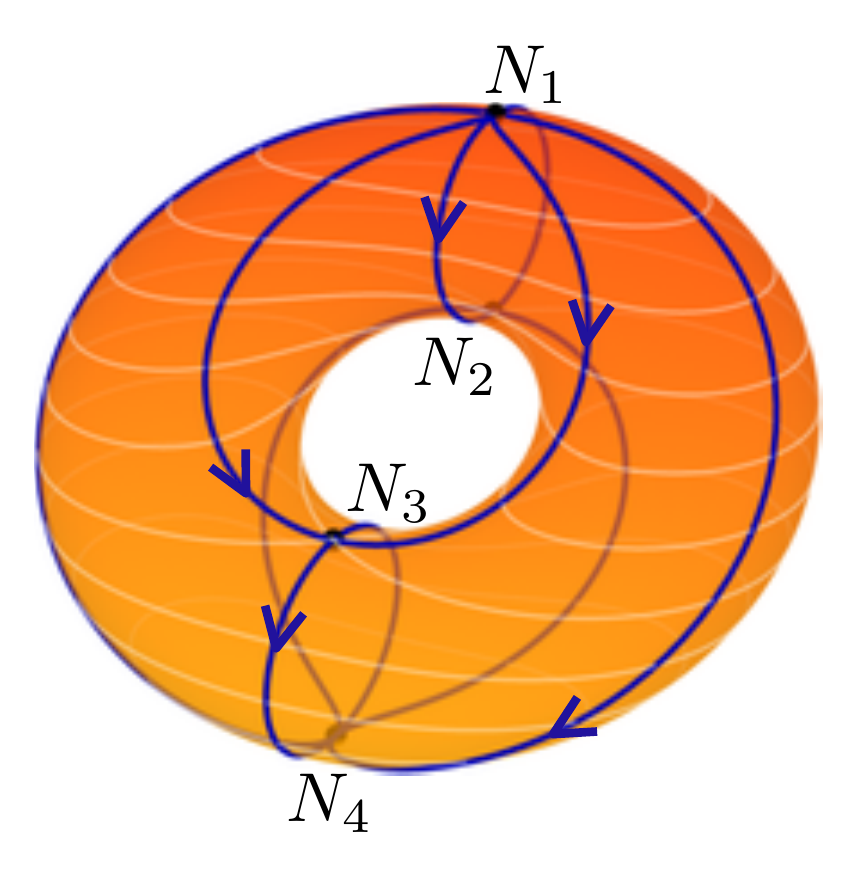}\hskip0.cm 
 \begin{tikzpicture}[scale=.3]
 \SetGraphUnit{5}
 \Vertex[L=$\bigO_1$]{A}
 \SO[L=$\bigO_2$](A){B}
 \SO[L=$\bigO_3$](B){C}
 \SO[L=$\bigO_4$](C){D}
 \tikzset{EdgeStyle/.append style = {bend left=45}}
 \Edge(A)(B)
 \Edge(A)(C)
 \Edge(A)(D)
 \tikzset{EdgeStyle/.append style = {bend right=45}}
 \Edge(B)(C)
 \Edge(B)(D)
 \tikzset{EdgeStyle/.append style = {bend right=0}}
 \Edge(C)(D)
\end{tikzpicture}
\hskip0.cm
\tikzset{
 LabelStyle/.style = { rectangle, rounded corners, draw, minimum width = 2em, text = black, font = \bfseries },
 EdgeStyle/.append style = {->, bend right=62,double}
}
\scalebox{1}{
\begin{tikzpicture}[scale=.3]
 \SetGraphUnit{5}
 \Vertex[L=$\bigO_1$]{A}
 \Vertex[L=$\bigO_{m}$,a=-90,d=5.8cm]{B}
 \Vertex[L=$\bigO_{n}$,a=-90,d=11.6cm]{C}
 \Vertex[L=$\bigO_\infty$,a=-90,d=17.4cm]{D}
 \Edge[local=true,label=$\bigd$,color=white,labelstyle={fill=white},style={bend left=0}](A)(B)
 \Edge[local=true,label=$\bigd$,color=white,labelstyle={fill=white},style={bend left=0}](B)(C)
 \Edge[local=true,label=$\bigd$,color=white,labelstyle={fill=white},style={bend left=0}](C)(D)
 \Edge(A)(B)
 \Edge(A)(C)
 \Edge(A)(D)
 \tikzset{EdgeStyle/.append style = {bend left=45}}
 \Edge(B)(C)
 \Edge(B)(D)
 \Edge(C)(D)
\end{tikzpicture}
}
 \caption{{\bf An example of \gr graph.} {\bf (LEFT)} Dynamics induced on the 2-torus by the gradient vector field of the height function. In this case the Lyapunov function is the height function itself, some level set of which is shaded in white. In blue are shown the heteroclinic trajectories joining the critical point (which are exactly the invariant sets of this dynamical system). {\bf (CENTER)} The \gr graph of the dynamical system on the left. In this case it is a 4-levels tower. (RIGHT) An infinite tower.
 }
 \label{fig:pp}
\end{figure}
{\bf The \gr graph.} 
Conley~\cite{Con72,Con78} realized that chain-recurrence could be used to define a graph of a dynamical system.
His investigations concerned dynamical systems that come from ordinary differential equations on compact spaces but,
over the years, his results have been extended to several other settings; in particular: continuous maps~\cite{Nor95b}, semi-flows~\cite{Ryb87,HSZ01,Pat07}, non-compact~\cite{Hur91,Pat07} and even infinite-dimensional spaces~\cite{Ryb87,MP88,CP95,HMW06} (notice that,  
throughout this article, we sort multiple citations in the order of their year of publication).
%
\allblack

{\bf Definition of Liapunov function.}
   A {\bf Lyapunov function}~\cite{WY73,Nor95b} for $\Phi:X\to X$ is a continuous function $L$ such that:
   \begin{enumerate}
       \item $L$ is constant on each node;
       \item $L$ assumes different values on different nodes;
       \item $L(\Phi(x))< L(x)$ if and only if  $x$ is not chain-recurrent.
   \end{enumerate}
   %
%

%
\begin{theoremN}[Norton, 1995~\cite{Nor95}]
    Let $\Phi$ be a dynamical system on a compact metric space $X$. Then there is a Lyapunov function for $\Phi$. 
    \allblack
\end{theoremN}
%
The Discrete Conley Theorem allows to associate a graph to any dynamical system as follows.

%

\bigskip\noindent {\bf Definition of graph of a dynamical system.}
    The {\bf \gr graph} $\Gamma$ of a dynamical system $\Phi:X\to X$ is a directed graph whose nodes are the nodes of $\Phi$.
   $\Gamma$ has an edge from node $N$ to node $N'$ if and only if there exist a trajectory $t$ of $\Phi$ with $\alpha(t)\subset N$ and $\omega(t)\subset N'$.

\bigskip
\noindent{\bf The graph of a dynamical system has no loops.} Notice that from the Discrete Conley Theorem it follows that, if there is an edge from $N$ to $N'$, there cannot be an edge from $N'$ to $N$. 
Moreover, there cannot be any loops, that is, there cannot be a collection of nodes $N_1,\dots,N_k$ such that there is an edge from $N_i$ to $N_{i+1}$, for $i=1,\dots,k-1$, and from $N_k$ to $N_1$.

\tikzset{
 LabelStyle/.style = { rectangle, rounded corners, draw, minimum width = 2em, fill = yellow!50, text = red, font = \bfseries },
 EdgeStyle/.append style = {->, bend left=45,double=yellow}
}
\bigskip\noindent {\bf Definition of tower.}
   We say that a \gr graph $\Gamma$ is a {\bf tower} if there is an edge between every pair of distinct nodes of $\Gamma$.
\bigskip   
   
%
Towers are the kind of \gr graph that this article is about.
We show in Fig.~\ref{fig:pp} some example of tower with finitely and infinitely many nodes. Since we assume $X$ to be compact, in a tower there is always a lowest node and it contains a Milnor attractor. An elementary example of dynamics with such a graph is the gradient flow of a Morse function on the 2-torus (see Fig.~\ref{fig:pp}, right). 

\section{The logistic map}
\label{sec:lm}
The logistic map
\beq
\label{eq:lm}
\ell_\mu(x) = \mu x(1-x),\;x\in[0,1],\;\mu\in[0,4],
\eeq
is among the simplest continuous maps giving rise to a non-trivial dynamics. 
We recall that a continuous map $f:[0,1]\to[0,1]$ with $f(0)=f(1)=0$ and for which there is a point $c\in(0,1)$ such that $f$ is strictly increasing (resp. decreasing) for $x<c$ and strictly decreasing (resp. increasing) for $x>c$, is called {\bf unimodal}. Moreover, a unimodal map $f$ is {\bf S-unimodal} if it is at least $C^3$ and its Schwarzian derivative (see~\cite{Sin78})
is negative for every $x\neq c$. Notice that the logistic map is a S-unimodal map.

\medskip

For $\mu\in(0,1]$, the point $0$ is the unique attractor of $\ell_\mu$ and its basin of attraction is the whole segment $[0,1]$. From now on, we will assume that $\mu\in(1,4]$.

The focus of the present work is on the edges in the \gr graph of the logistic map. Indeed, as we will discuss below, while the structure of the invariant attracting and repelling sets have been thoroughly studied at several levels of generality, no one seems to have focused on the edges of the \gr graph for the logistic map. Hence we do.

We have written this section so that the reader will gain a detailed picture of the possible chain-recurrent sets and the connections between them. To do this, we have a number of propositions whose proofs are often quite simple. The propositions are there to create a mental picture of the graphs. 

The literature we refer to discusses non-wandering sets, whereas we investigate chain-recurrent sets. 
The biggest difference between these two approaches occurs at the final parameter value of each window. 
At those values, there is a chaotic attractor, consisting of intervals, and a repelling Cantor set, and these two non-wandering sets have a common periodic orbit. 
There is a single node, consisting in a finite union of intervals, that includes both non-wandering sets and the gaps in the Cantor set. 

Notice, finally, that in all our proofs below we never use any property specific to the logistic map but rather those that come from it being a S-unimodal map. Hence, all our results actually hold in general for any S-unimodal map.
\allblack
\subsection{Invariant sets of the logistic map}
The classification of attractors and repellors of S-unimodal and unimodal maps was an important achievement of 1-dimensional dynamics. Below we recall these results, that we state for the specific case of the logistic map, since they are the starting point of our work and we are going refer to them often in the remainder of this section.

\bigskip\noindent{\bf Attractors.}
%
The first fundamental result found about attractors in S-unimodal maps is the uniqueness of periodic attractors.
\begin{definition}
    The {\bf immediate basin} of an attracting periodic orbit is the union of all connected components of the basin of the orbit that contain a point of it. 
\end{definition}
\begin{theoremS}[Singer, 1978~\cite{Sin78}]
    If $\ell_\mu$ has an attracting periodic orbit $P$, then it has no other attracting periodic orbit and the critical point $c$ belongs to the immediate basin of $P$. 
\end{theoremS}
The classification of attractors and their uniqueness was proved by Guckenheimer in case of S-unimodal maps and by Jonker and Rand in case of unimodal maps (see also Thm.~4.1 in~\cite{dMvS89}). 
\begin{definition}
    \label{def:trap}
    By a {\bf trapping region} we mean a collection $\cT$ of $k$ intervals with disjoint interiors $J_1,\ell_\mu(J_1),\dots,\ell_\mu^{k-1}(J_1)$ 
    such that:
    \begin{enumerate}
    \item $c\in\text{int}(\J_1)$;
    \item $\ell_\mu^k(J_1)\subset J_1$.
    \end{enumerate}
    We denote by $J_i$ the sets $\ell_\mu^{i-1}(J_1)$, $1\leq i\leq k$, and by
    $\Jall=\Jall(\cT)$ the union of the interiors of the $\J_i$.
    
\end{definition}
It is unusual to ask that $c$ belongs to the trapping region but this restriction is automatically satisfied for the trapping regions we are interested in.

\begin{theoremA}[Guckenheimer~\cite{Guc79}; Jonker and Rand~\cite{JR80}]
    The map $\ell_\mu$ has exactly one attractor and this attractor is one of the following types:
    \begin{enumerate}
        \item {\bf Periodic}; this attractor is a periodic orbit. This case includes when the orbit attracts only from one side;
        \item {\bf Chaotic}; this attractor is a trapping region with a dense trajectory. 
        \item {\bf Almost Periodic}~\cite{Mil85}, also sometimes called ``odometer'' or ``solenoid''; this attractor is a Cantor set on which $\ell_\mu$ acts as an adding machine (e.g. see Chap.~2 in~\cite{Bue12}).
    \end{enumerate}
    The basin of attraction has full measure in all three cases but only in the first two does the basin have non-empty interior. The critical point $c$ is always in the basin.
\end{theoremA}
Bifurcation plots showing the dependence of the attractor on the parameter $\mu$ for values to the left of the so-called {\bf Myrberg-Feigenbaum } parameter value
$\mu_{FM}\simeq3.5699$~\cite{Fei78} appeared in several publications in the $1970$'s but, to the best of our knowledge, the first picture of the full bifurcation diagram 
appeared first in an article by Grebogi, Ott and Yorke in 1982~\cite{GOY82}.

The almost periodic case occurs for those parameter values for which the graph has infinitely many nodes. For each point $x_0$ in the Cantor set attractor and each $\varepsilon>0$, there is a periodic point $x_{\varepsilon}$ such that the $n$-th iterate of the map on $x_0$ and the $n$-th iterate of the map on $x_{\varepsilon}$ stay within $\varepsilon$ of each other for all time $n\geq0$. 
At $\mu_{FM}$, every period orbit has period $2^k$ for some $k$ and none of them belongs to the Cantor set. They converge to the Cantor set attractor as $k\to\infty$.

We write the parameter space as $(1,4]=\AP\cup\AC\cup\ALP$, where the union is disjoint, ${\mathcal A}_P$ is the set of parameters for which the attractor is a periodic orbit that is not a one-sided attractor, $\ALP$ the set of those for which it is a Cantor set and ${\mathcal A}_C$ is the set of all other parameters, which includes all those for which the attractor is chaotic.
The set ${\mathcal A}_P$ is open, a trivial consequence of the stability of periodic orbits under small perturbations, and dense, as proved independently by Lyubich~\cite{Lyu97} and Graczyk and Swiatek~\cite{GS97}.
The complement of $\AP$ is a Cantor subset of $[\mu_{FM},4]$. Notice that $\mu_{FM}\in\ALP$ and $4\in\AC$. Jakobson~\cite{Jak81} proved in 1981 that $\AC$ has positive measure. It was proved in 2002 by Lyubich~\cite{Lyu02} that $\ALP$ has measure zero.

\bigskip\noindent{\bf Repellors.}
We come now to the results about the decomposition of the whole non-wandering set of $\ell_\mu$,
which we denote by $\Omega_{\ell_\mu}$. 
This result is a generalization of the following well-known result of Smale (see Sec.~I.6 in~\cite{Sma67} for details) called ``Spectral Decomposition for Diffeomorphisms''. 
In this theorem, Smale showed that, for a smooth manifold $M$ and a Axiom-A diffeomorphism $f$ on $M$, there is a unique way to write non-wandering set $\Omega_f$ as the finite union of elementary pairwise disjoint components $\Omega_j$, on each of which the map has a dense orbit.
\begin{definition}
    We say that a set $P$ equal to the finite union of $k$ isolated periodic orbits is a {\bf cascade segment} if the periodic orbits in $P$ belong to the same cascade, whose first orbit has period $n$, and their period are equal to $n,2n,\dots,2^{k-1}n$ for some $k$.
\end{definition}
\begin{theoremW}[van Strien, 1981~\cite{vS81}]
     Each $\ell_\mu$ has $p+1$ trapping regions $\cT_j$, where $p$ can be infinite, such that the following properties hold:
    \begin{enumerate}
    \item $\ell_\mu(\text{cl}(\Jall(\cT_j)))\subset \Jall(\cT_j)$ for all $0\leq j<p$.
    \item  The $\cT_j$ are nested: $\text{cl}(\Jall(\cT_j))\subset\text{cl}(\Jall(\cT_{j-1}))$ for all finite $1\leq j\leq p$.
    \item Set $K_j=\text{cl}(\Jall(\cT_{j-1}))\setminus\text{cl}(\Jall(\cT_{j}))$. Then
    $\Omega_{\ell_\mu}$ is the union of the following $p+1$ closed forward invariant sets:
    $$
    \Omega_j=\bigcap_{n\geq0}\ell^n_\mu\left(K_j\right), 0\leq j<p,
    $$
    $$
    \Omega_p = \Omega_{\ell_\mu}\cap\left[\bigcap_{j=0}^p\text{cl}(\Jall(\cT_{j}))\right].
    $$
    \item $\Omega_0=\{0\}$;
    \item Each $\Omega_j$, $0<j<p$, is the union of a Cantor set $C_j$ and a cascade segment. 
    The action of $\ell_\mu$ on $C_j$ is a subshift of finite type with a dense orbit. 
    \item Each $\Omega_j$ is hyperbolically repelling for $j<p$;
    \item $\Omega_p$ is the unique attractor of $\ell_\mu$ and it is not, in general, hyperbolic [Authors' note: the attractor fails to be hyperbolic at the beginning and end of each window, see~Prop.~\ref{thm:attracting})].
    \item $\Omega_i\cap\Omega_j=\emptyset$ for $0\leq i,j<p$ with $i\neq j$.
    \item When $p<\infty$, $\Omega_{p-1}\cap\Omega_p$ is empty except when $\Omega_p$ is not hyperbolic, in which case it contains a single periodic orbit.
    \item When $p=\infty$, all $\Omega_j$ are disjoint, $\Omega_\infty$ is a Cantor set and the action of $\ell_\mu$ on it is an adding machine.
    \end{enumerate}
    
\end{theoremW}
The first version of this theorem was given by Jonker and Rand~\cite{JR80} in case of unimodal maps. The specialization, used in the statement above, to S-unimodal maps is due to van Strien~\cite{vS81}. Several other versions and generalizations of this theorem are available in literature, \eg\ Holmes and Whitley~\cite{HW84}, Blokh and Lyubich~\cite{BL91}, Blokh~\cite{Blo95}, Sharkovsky et al.~\cite{SKSF97}. Possibly the most thorough version is Thm.~4.2 in~\cite{dMvS93} by van Strien and de Melo.

To our knowledge, no bifurcation diagram showing a repelling Cantor set has appeared to date in literature. 
Our Fig.~\ref{fig:full} and~\ref{fig:p3} illustrate the content of the theorem above by showing the attractors (in shades of grey) together with some repelling periodic orbits (in green) and Cantor sets (in red and blue).

The Non-Wandering Theorem above has some shortcoming for our purposes, e.g.: 1) the problem of what determines the number of cyclic trapping regions of $\ell_\mu$ is not addressed; 2) unlike the nodes in our approach, the decomposition sets $\Omega_i$ are not necessarily pairwise disjoint; 3) nothing is explicitly stated about the dynamics of points outside the basin of attraction of the attractor; 4) the decomposition sets $\Omega_j$ contain, in general, more than one node. In particular, the restriction of the logistic map to them cannot a dense orbit, 
how happens instead in case of Axiom-A diffeomorphisms.
In the remainder of this section we show that these shortcomings naturally disappear by replacing $\Omega_{\ell_\mu}$ with ${\mathcal R}_{\ell_\mu}$.

\bigskip\noindent
{\bf Homtervals.} 
We recall a last theorem that we are going to use several times in this section.
\begin{definition}
   A closed interval $J\subset [0,1]$ is a {\bf homterval} for $\ell_\mu$ if $c$ is not in the interior of $\ell^k_\mu(J)$ for any integer $k\geq0$.
\end{definition}
Notice that, equivalently, $J$ is a homterval for $\ell_\mu$ if all of its iterates $\ell^k_\mu$, $k=0,1,\dots$, are strictly monotonic on $J$.
\begin{theoremH}[Guckenheimer, 1979~\cite{Guc79}]
    \label{thm:Guc79}
    Assume that $\ell_\mu$ admits a homterval $J$. Then the attractor of $\ell_\mu$ is periodic and $\omega(x)$ is a periodic orbit for each $x\in J$. 
\end{theoremH}
Notice that, in particular, $\omega(x)$ must be equal to the attractor for almost all points of $J$.

    
The following case is particularly important to us. When a period-$k$ orbit is attracting for some $\mu_0$, and then $\mu$ is increased so that the periodic orbit becomes unstable, it period-doubles at some $\mu_1$. 
Then, at some $\mu_2>\mu_1$, the new attracting period-$2k$ orbit is superstable. 
For each $\mu\in(\mu_1,\mu_2]$, each point $z$ of the now-repelling period-$k$ orbit lies in an interval $K_z$ whose endpoints are period-$2k$ points. 
$K_z$ is a homterval that is invariant under $\ell_\mu^{k}$. 
In particular, all points in the homterval $K_z$, except $z$, are attracted to the period-$2k$ orbit. 
\subsection{Chain-recurrence and graph for the logistic map.} 
This subsection contains our contributions for this article.
We start by introducing cyclic trapping regions (Def.~\ref{def:ic}), a kind of trapping region having  
a periodic orbit on its boundary. 
Cyclic trapping regions
and their ``accessible'' periodic orbits (Def.~\ref{def:accessible}) 
come in two flavors: regular and flip (Def.~\ref{def:regflip}). 

{\bf Pairing windows and regular trapping regions.} For the logistic map, there is a 1-1 correspondence that pairs each repelling regular trapping region with a ``window'' in the bifurcation diagram (see Fig.~\ref{fig:full} for the full diagram and Fig.~\ref{fig:p3} for a detail of the period-3 window).
A window is a maximal interval $[\mu_0,\mu_1]$ that has a regular cyclic trapping region that persists throughout the interval.
The observed cascade of the window lies within the cyclic trapping region.
Every window comes with a node which is the invariant Cantor set of chain-recurrent points that do not fall into the trapping region. 
Figure~\ref{fig:p3} shows, for each value of $\mu$, the Cantor set of the period-3 window (in red) and the one of its period-9 subwindow (in blue). 
For each $\mu$, the largest white gap in the Cantor set is one of the intervals of the regular trapping region corresponding to that window, which we call $\J_1$. 
The number of such intervals, namely the period of the cyclic trapping region, coincides with the period of the window and with the period of the orbit at the boundary of the trapping region. 
This orbit is the unique ``accessible'' orbit of the Cantor set (see Def.~\ref{def:accessible} and Thm.~\ref{thm:accessible}).

A period-$2k$ flip trapping region is created as $\mu$ increases when the derivative at the period-$k$ point $\frac{d}{dx}\ell^k_\mu(p_1)$ becomes negative (see Fig.~\ref{fig:ic2}).

We show that each repelling node is either a Cantor set, in case the corresponding cyclic trapping region is regular, or a flip periodic orbit, in case is flip (Prop.~\ref{thm:repelling}). We give an analogue classification of attracting nodes (Prop.~\ref{thm:attracting}). 
\allblack

{
We use cyclic trapping regions to study the structure and properties of the nodes and edges of the \gr graph of ${\ell_\mu}$. 
Here are some key structural results:
\begin{itemize}
    \item {\bf Each node has its own trapping region.} Given a node $N$, the minimum distance between $N$ and $c$ is achieved at a period-$k$ point $p_1$ in $N$ and the map $\ell^{2k}_\mu$ leaves invariant the interval $\J_1$ with endpoints $p_1$ and $1-p_1$ (Prop.~\ref{thm:node2ic}). 
    The interval $\J_1$ it is one of the intervals of a cyclic trapping region $\cT(N)$. 
    The period of the trapping region is $k$ if it is regular and $2k$ if it is flip. 
    It follows that no point of $N$ falls under the map into the interior of the trapping region. 
    \item {\bf Each trapping region has its own node.} Given a cyclic trapping region $\cT$, its interval $\J_1$ contains the critical point. There is a unique node $\text{Node}(\cT)$ containing the periodic orbit (passing through $p_1$) at its boundary (Def.~\ref{def:N(T)}). 
    \item {\bf The trapping region and the node have one periodic orbit in common.} The point $p_1$ belongs to this periodic orbit.
    \item {\bf Given any two distinct nodes $N$ and $N'$, one must be in the trapping region of the other.} Specifically, if $p_1(N)$ is closer to $c$ than $p_1(N')$, then $N$ is in the trapping region of $N'$.
    \item {\bf The graph is a tower.} For each chain-recurrent point $x\in\Jall(N)$, there is a trajectory $t$ passing through $x$ that converges backwards to $p_1(N)$ (Prop.~\ref{prop:upstream}). This means that there is an edge between each pair of nodes, namely the graph is a tower (Thm~\ref{thm:tower}). This tower is infinite if and only if $\mu\in\ALP$. In  Figs.~\ref{fig:full} and~\ref{fig:p3} we show several examples of towers in the logistic map.
\end{itemize}



}

\bigskip\noindent{\bf Windows.} We start by introducing and refining some fundamental concepts that will be in the background of all statements in this section.
%
%
\begin{definition}
    \label{def:ic}
    A {\bf period-\BF$k$ (cyclic) trapping region} is a trapping region $\cT=\{\J_1,\ell_\mu(J_1),\dots,\ell^{k-1}_\mu(\J_1)\}$ such that $J_1$ has endpoints $p_1$ and $q_1=1-p_1$ and the one denoted by $p_1$ is periodic.
    We denote by $|\cT|$ the period of  $\cT$ and by $\orbit(\cT)$ the periodic orbit containing $p_1$.
\end{definition}
\begin{remark}
    The requirement that each interval of a period-$k$ trapping region be an iterate of $J_1$ can be relaxed by allowing each $J_i$ to be larger than $\ell_\mu(J_{i-1})$,
    as long as $\ell_\mu(J_k)\subset J_1$. For such generalized trapping region, therefore, we have that 
    $\ell_\mu(J_{i})\subset J_{i+1}$ for all $i$, where we identify $J_{k+1}$ with $J_1$.
    
    There are two limiting cases. One is the standard trapping region defined above, when all inclusions are equalities except for the last one: $\ell_\mu(J_k)\subset J_1$. This choice is minimal: if any $J_i$, $i\neq1$, is taken smaller, then there is no trapping region with that $J_1$ and that $J_i$. The other one is the one where all inclusions are equalities except for the first one: $\ell_\mu(J_1)\subset J_2$. This choice is maximal: if $J_2$ were chosen any larger, $\ell_\mu(J_k)$ would be larger than $J_1$. In a maximal trapping region $\ell_\mu$, restricted to any $J_i$, sends interiors into interiors and endpoints into endpoints.
    
    The results of this section do not depend on which particular definition of trapping region is used. In Prop.~\ref{thm:repelling} and in the pictures we use the ``maximal'' definition because it is more convenient.
\end{remark}
As Figs.~\ref{fig:pric} and~\ref{fig:p24ic} suggest, there are two distinct kinds of cyclic trapping regions.
\begin{definition}
\label{def:regflip}
    Let $\cT$ be a cyclic trapping region of $\ell_\mu$
    such that $\orbit(\cT)$ is a period-$k$ orbit. 
    Denote by $D$ be the value of the derivative of $\ell^k_\mu$ at any points of $\orbit(\cT)$; the value does not depend on the point chosen. 

    If $D >0$, we call $\orbit(\cT)$ a {\bf regular orbit} and $\cT$ is a {\bf regular (cyclic) trapping region}. 
    In this case $|\cT|=k$.

    If $D < 0$, we call $\orbit(\cT)$ a {\bf flip orbit} and $\cT$ is a {\bf flip (cyclic) trapping region}. In this case $|\cT|=2k$.
    
    The case $D=0$ is degenerate.
\end{definition}

Notice that, since periodic orbits are stable with respect to small perturbations, a trapping region $\cT=\cT_\mu$ depends continuously on $\mu$.
\begin{definition}
    \label{def:range}
    For any trapping region $\cT$, we denote by {\BF$\text{\bf Range}(\cT)=[\mu_0,\mu_1]$} the closed maximal $\mu$ interval on which $\cT=\cT_\mu$ can be defined continuously and write $p_1=p_1(\mu)$ for the periodic point on the boundary of $\J_1=\J_1(\cT_\mu)$.
    We say that $\cT_\mu$ is a 1-parametric family of regular trapping regions when it is a regular cyclic trapping region for every $\mu\in\text{Range}(\cT)$. 
     We say that $\cT_\mu$ is a 1-parametric family of flip trapping regions when it is a regular cyclic trapping region for at least a value $\mu\in(\mu_0,\mu_1)$. 
\end{definition}
{\bf Beginning and end of (a family of) trapping regions.} 

{\bf Let $\cT$ be a regular trapping region.}
When $\orbit(\cT)$ is repelling, the family begins at $\mu=\mu_0$ with an 
attractor-repellor bifurcation point with $D =+1$. 
An example is shown in Fig.~\ref{fig:p3ic} in case of the period-3 window, where $\mu_0$ and $\mu_1$ are labeled $\pthree{b}$ and $\pthree{e}$, where $b$ and $e$ are for beginning and end. 
Each node has its own $p_1$, $q_1$ and $J_1$ and to distinguish between them we use primes and double primes.
A pair of period-3 orbits $\cO'$, given by $p'_1\mapsto p'_2\mapsto p'_3\mapsto p'_1$, and $\cO$, given by $p_1\mapsto p_2\mapsto p_3\mapsto p_1$, arises at $\pthree{b}=1+2\sqrt{2}\simeq3.828$. 
$\cO'$ is attracting until period-doubling, $\cO$ is always repelling. 
The two orbits coincide at $\pthree{b}$.
Notice that there is no other such pair of period-3 orbits for the logistic map.

There is a period-3 regular trapping region $\cT$ for which $\cO$ lies on its boundary.
We show in red, at $\mu=\pthree1$, the interval $\J_1$ of $\cT$.
At $\pthree3\simeq3.846$, we show the interval $\J_1$ above and, written over it in blue, the intervals $\J'_4$ (top), $\J'_1$ (center) and $\J'_7$ (bottom) of a period-9 regular trapping region $\cT'$ that is nested in $\cT$.
The ends of these blue intervals, not labeled in the picture, have black dots.
Notice that the $\J_1$ intervals of all cyclic trapping regions are all symmetric with respect to $c$ and so are all one inside the other; correspondingly, given any two cyclic trapping regions $\cT$ and $\cT'$, the intervals of one are all strictly contained in the intervals of the other.

The family ends at $\mu=\mu_1$, when $c$ falls eventually on $\orbit(\cT_{\mu})$, namely on $\cO$ (but $c$ does not belong to $\cO$). An example is the case $\mu=\pthree{e}\simeq3.8568$ in Fig.~\ref{fig:p3ic}.

When $\orbit(\cT)$ is attracting, at $\mu=\mu_0$, an orbit, $\orbit(\cT_{\mu_0})$, of period $k$ is created, for some $k$, with $D=+1$. 
Such case is shown in Fig.~\ref{fig:ic2} at $\mu=1$, where the fixed point $p_1$ of $\ell_\mu$ arises.
Caution: if the orbit is created at a period-doubling bifurcation, $k$ means the period of the newly created, period-doubled, orbit. 
The trapping region $\cT$ ends, at $\mu=\mu_1$, at the super-stable point, namely $c$ belongs to $\orbit(\cT_{\mu_1})$. 
Examples are shown in Fig.~\ref{fig:ic2}, at $\mu=2$, and in Fig.~\ref{fig:p3ic}, at $\mu=\pthree{ss}\simeq3.832$.
Strictly speaking, at $\mu=\mu_1$ the trapping region is degenerate since each interval collapses into a point. 

{\bf Let $\cT$ be a flip trapping region.} Such a family always starts at a super-stable point, where the trapping region is degenerate. 
Examples are shown in Fig.~\ref{fig:ic2}, where the attracting fixed point $p_1$ is superstable at $\mu=2$, and in Fig.~\ref{fig:p3ic}, where $p'_1$ belongs to $\cO'$, whose super-stable point is $\pthree{ss}\simeq3.832$.
Close enough to the super-stable point, $\orbit(\cT_\mu)$ is attracting (see the period-2 flip trapping region in Fig.~\ref{fig:ic2}).

At some $\bar\mu\in(\mu_0,\mu_1)$, $\orbit(\cT_\mu)$ has a period-doubling bifurcation point and becomes repelling.
In Fig.~\ref{fig:ic} we show several examples of flip trapping regions with a repelling periodic orbit at their boundary. At $\mu=\pone0$, there is only one such trapping region $\cT=\{\J_1,\J_2\}$. 
Here $\J_1=[q_1,p_1]$ and $\J_2=[p_1,q_2]$, where $p_1$ is the flip fixed point of the logistic map, $q_1=1-p_1$ and $\ell_\mu(q_2)=q_1$. Both $\J_1$ and $\J_2$ are painted in red. 
At $\mu=\pone1$, a second flip trapping region $\cT'=\{\J'_1,\dots,\J'_4\}$ arises, where $\J'_1=[p_1',q'_1]$, $\J'_2=[p_2',q'_2]$, $\J_3=[q'_3,p_1]$ and $\J'_4=[q'_4,p'_2]$. 
In this case, $p'_1,p'_2$ is a period-2 orbit, $\ell_\mu^2(q'_3)=q'_1$ and $\ell_\mu^2(q'_4)=q'_2$. 
This trapping region is painted in blue and its intervals $\J'_i$ are proper subsets of the $\J_i$.

At $\mu=\pone2$, a third flip trapping region $\cT'=\{\J'_1,\dots,\J'_8\}$ arises, where $\J''_1=[q''_1,p''_1]$ and so on. The $\J''_i$ are painted in dark green. At their boundaries lie the period-4 orbit $p''_1,\dots,p''_4$. In this case, $\ell_\mu^4(q'_5)=q'_1$ and so on.
Another example is provided in Fig.~\ref{fig:p3ic} at $\mu=\pthree2$. The intervals $\J_1',\J'_4$ shown, which are part of a period-6 flip trapping region of $\ell_\mu$, are a period-2 flip trapping region for $\ell^3_\mu$. 

The family ends, at $\mu=\mu\INT$, when $c$ falls eventually on $\orbit(\cT_{\mu})$ (but it is not super-stable). In this special case, all endpoints of the $\J_i$ are on the orbit of $c$.
In Fig.~\ref{fig:ic2} we mark with a cyan vertical line the point 
\beq
\label{Merge}
\mu_M=3[ 1 + (19-3\sqrt{33})^{1/3} + (19+3\sqrt{33})^{1/3} ]/2\simeq3.67857.
\eeq
This value marks the end of the flip trapping region $\cT$ with $\orbit(\cT)=\cO$. 
At $\mu_M$, we have that $\ell_\mu(c)=q_2$, $\ell^2_\mu(c)=q_1$, and $\ell_\mu^3(c)=p_1$. 
In Fig.~\ref{fig:p3ic} we do the same with the point $\pthree{M}\simeq3.851$. This value marks the end of the flip trapping region $\cT'$ such that $\orbit(\cT')=\cO'$.

\begin{figure}
 \centering
 \includegraphics[width=11cm]{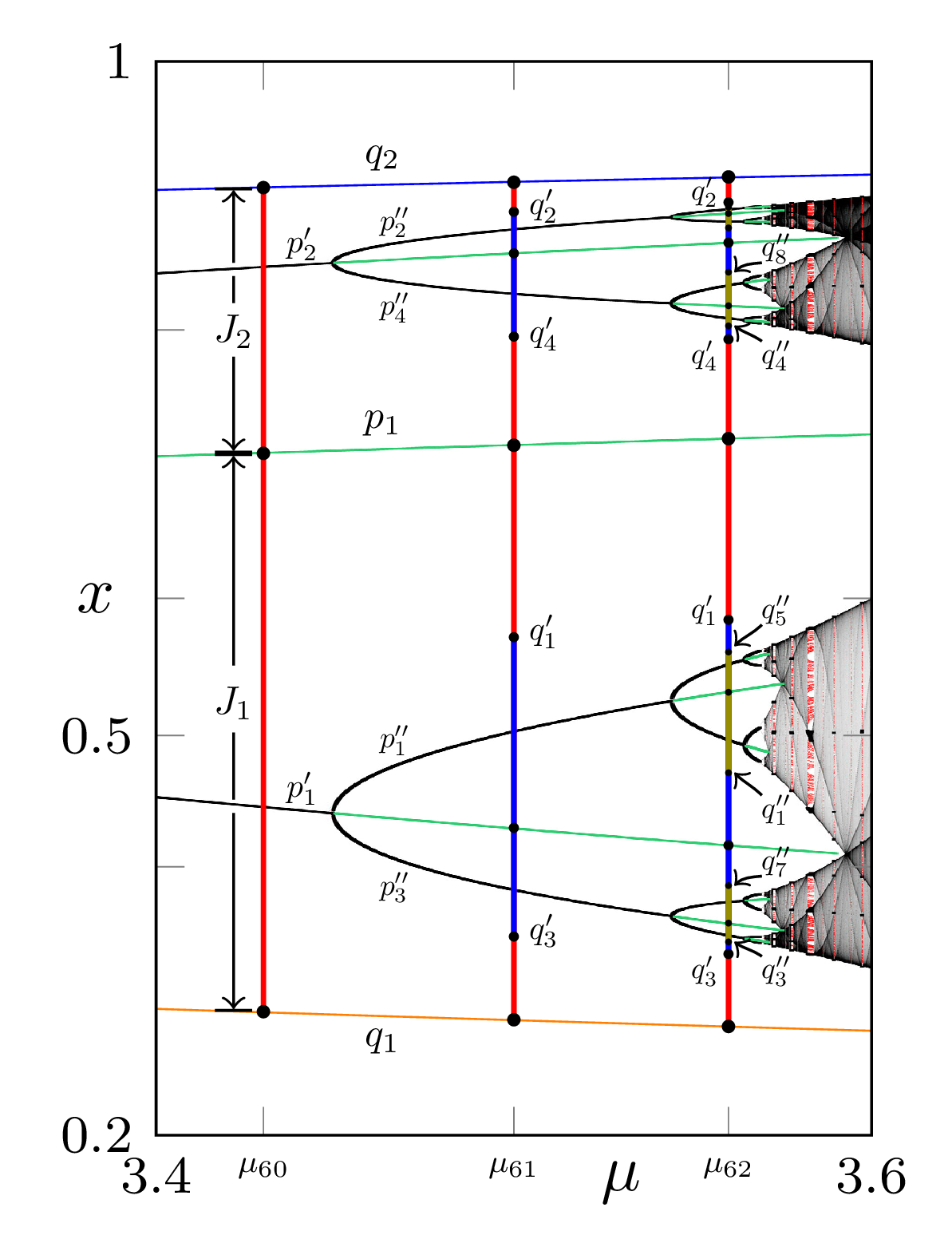}
 \caption{{\bf Flip trapping regions of the logistic map.} 
 This picture is a blowup of Fig.~\ref{fig:ic2}. It shows at the three parameter values $\pone0=3.43,\pone1=3.5,\pone2=3.56$, all flip cyclic trapping regions of the logistic map with a repelling orbit at their boundary. 
 At $\mu=\pone0$, there is a single flip trapping region $\cT=\{\J_1,\J_2\}$, where $\J_1=[q_1,p_1]$ and $\J_2=[p_1,q_2]$. The point $p_1$ is the flip fixed point, $q_1=1-p_1$ and $\ell_\mu(q_2)=q_1$. 
 A description for the other two values of $\mu$ is given in the text below Def.~\ref{def:range}.
 }
 \label{fig:ic}
\end{figure} 

\begin{figure}
 \centering
 \includegraphics[width=14.4cm]{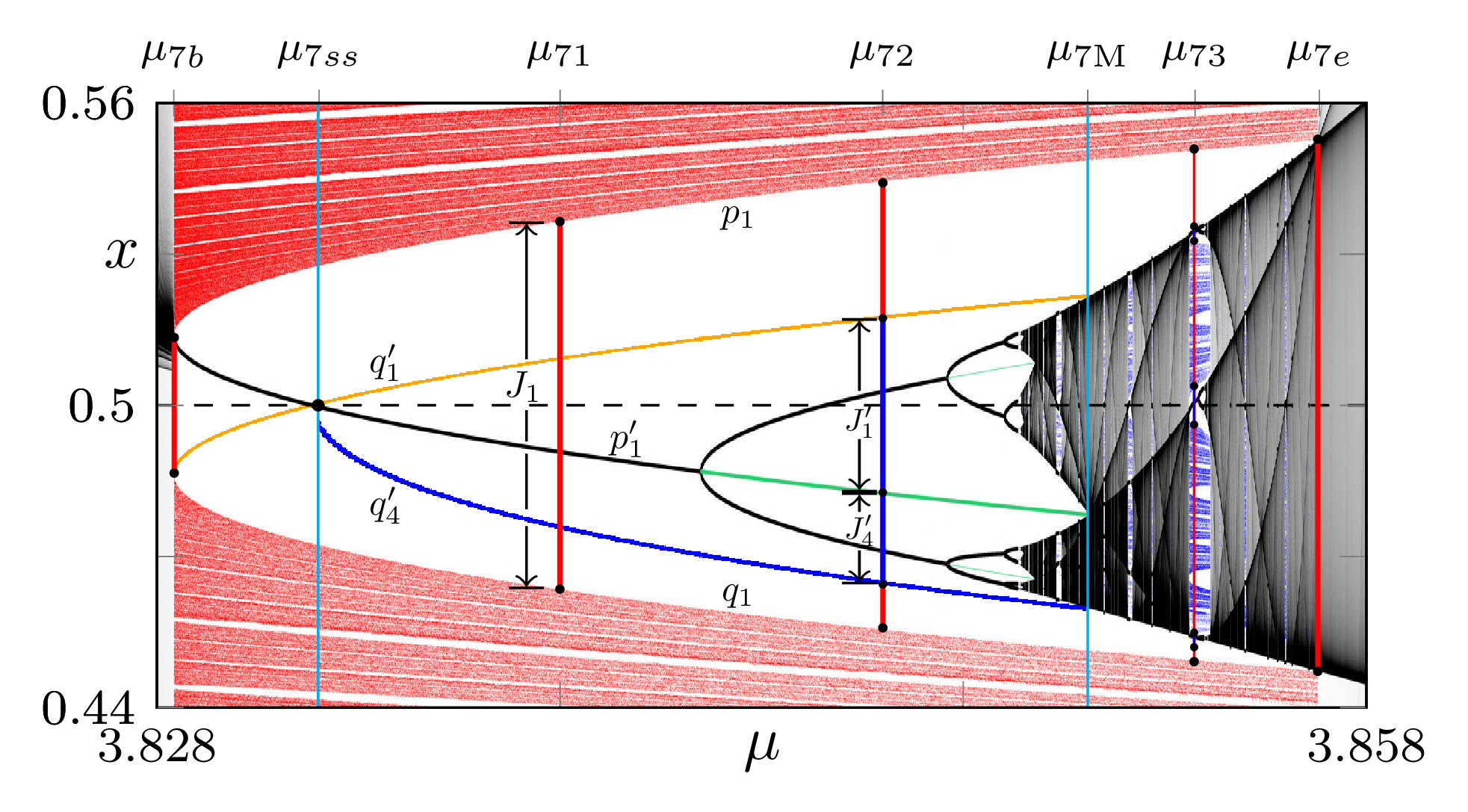}
 \caption{{\bf Examples of regular and flip cyclic trapping regions.} 
    In this detail of the period-3 window, the curve of points denoted by $p_1$ is a period-3 repellor. It separates the red Cantor set from the white basin of the attractor. The curve of points $q_1=1-p_1$ separates the Cantor set from the basin. 
    A description for the marked values of $\mu$ is given in the text below Def.~\ref{def:range}.
}
 \label{fig:p3ic}
\end{figure} 
\bigskip

We now define the well-known concept of a window in a bifurcation diagram.
We define it from an unusual point of view which emphasizes the importance of trapping regions. 
A window is where a particular kind of trapping region exists. 
\begin{definition}
    \label{def:win}
    Let $\cT$ be a period-$k$ regular trapping region such that $\orbit(\cT)$ is repelling.
    Let the $\mu$-interval $W$ be the (maximal) range of $\cT$. 
    We say that $W$ is a  {\bf period-\BF$k$ window}.
    When a period-$k_1$ window $W_1$ contains a period-$k_2$ window $W_2$, with $k_2\geq k_1$, we say that $W_2$ is a {\bf subwindow} of $W_1$. Notice that $k_2$ is a multiple of $k_1$.
\end{definition}
We will show in Prop.~\ref{prop:k'=Ck} that, if $W_1=\text{Range}(\cT_1)$ and $W_2=\text{Range}(\cT_2)$ and both $\orbit(\cT_1)$ and $\orbit(\cT_2)$ are repelling, then $k_2>k_1$.

\bigskip\noindent{\bf Nodes.} Here we will study in detail nodes and their relation with cyclic trapping regions. 
We will show that each node is paired with a cyclic trapping region and this cyclic trapping region has always a periodic orbit of the node on its boundary. 
This will enable us to classify all repelling and attracting nodes of the logistic map.

\begin{figure}
 \centering
 \includegraphics[width=13.5cm]{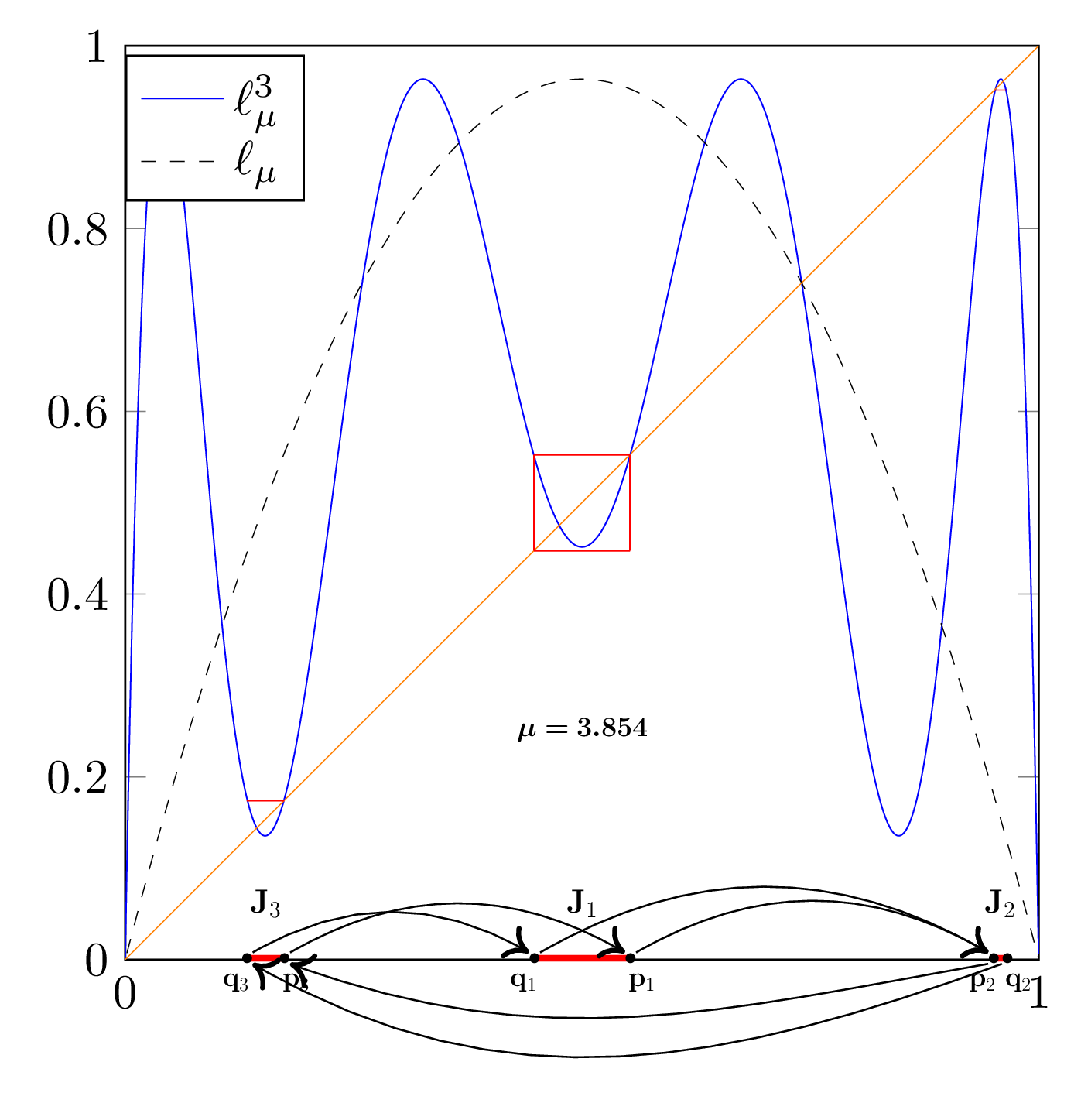}
  \caption{
  {\bf A regular cyclic trapping region} for the logistic map. The bifurcation diagram for the logistic map has a period-3 window in parameter space starting at $\mu_0=1+\sqrt{8}\simeq3.828$, where a pair of an attracting and a repelling period-3 orbits arise, and ending at $\mu_1\simeq3.857$, where the unstable periodic orbit collides with the attractor (i.e. there is a {\em crises}~\cite{GOY82}).
  Here we show what is happening at one of the intermediate parameter values, { $\mu =3.854$.}
  There are intervals $\J_i$ with endpoints $q_i,p_i$, $i=1,2,3$, shown in red, which, together, form a period-3 regular cyclic trapping region for $\lmu$ (Def.~\ref{def:ic}). The arrows show how the endpoints map under $\lmu.$
  The intervals are chosen so that 
  $p_1\mapsto p_2\mapsto p_3\mapsto p_1$ is an unstable period-3 orbit
  and $\lmu^3(q_i)=p_i$ for $i=1,2,3$. This construction gives rise to a cyclic trapping region because, for this value of $\mu$ and this choice of the endpoints, $\lmu^3(\J_i)\subset \J_i$. 
  The picture also includes the graph of $\lmu^3(x)$ and $\lmu(x)$.
  }
  \label{fig:pric}
\end{figure}

\begin{figure}
 \centering
 \includegraphics[width=7cm]{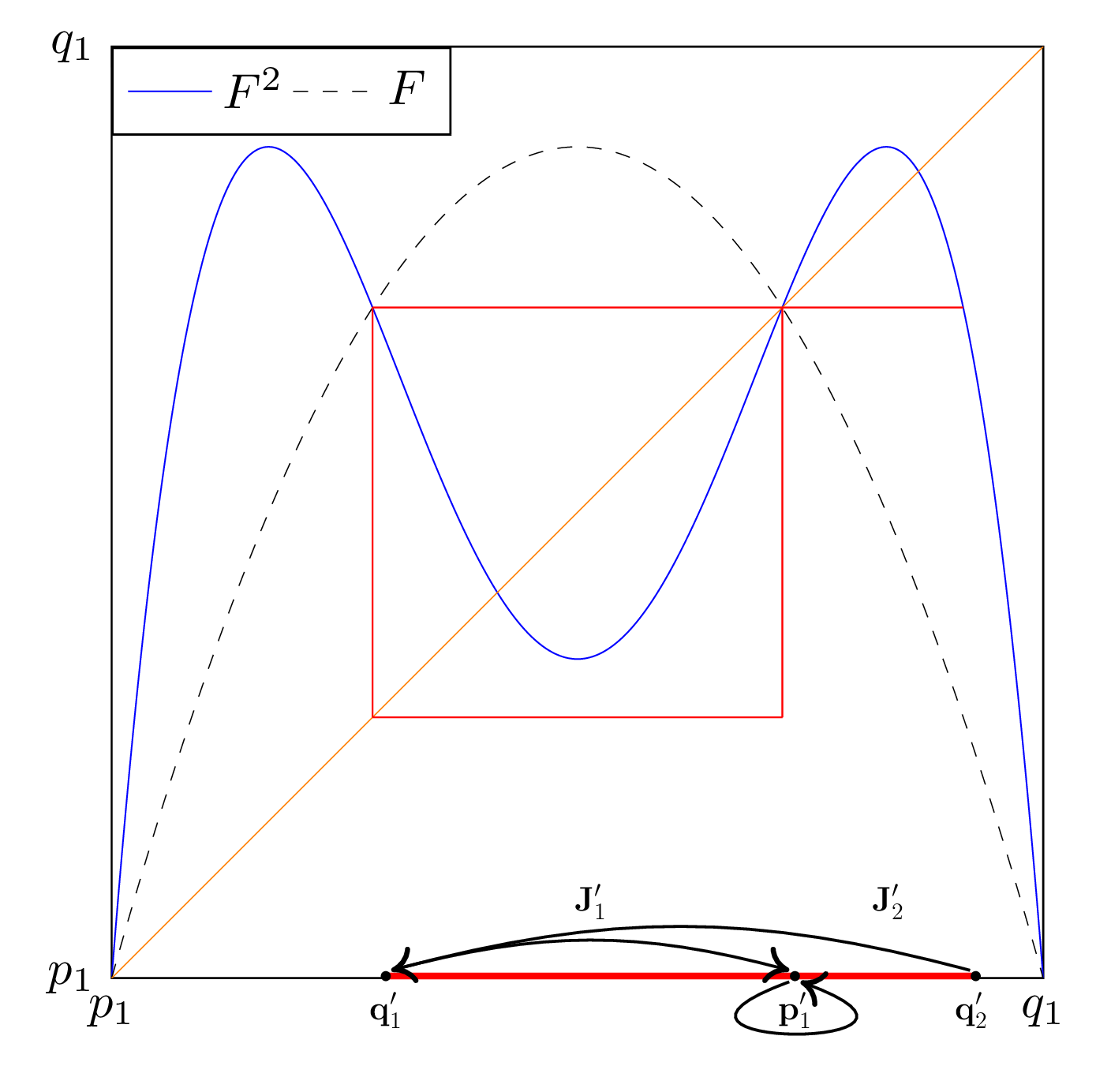}
 \includegraphics[width=7cm]{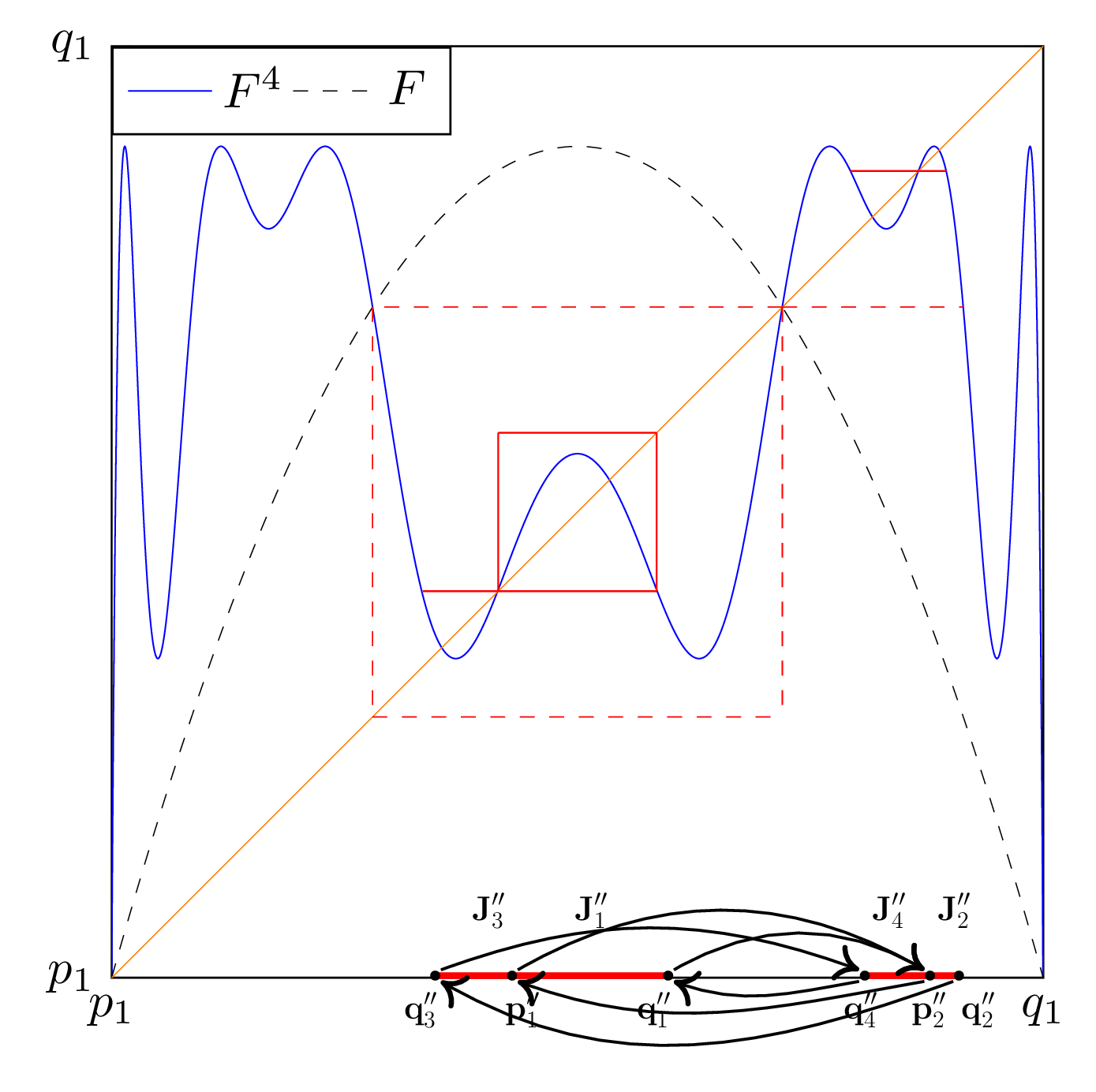}
  \caption{
  {\bf Nested flip cyclic trapping regions} for the logistic map.
  On the left panel, we show an interval $[p_1,q_1]$ that is the $\J_1$ interval of a period-$k$ cyclic trapping region $\cT$ for $F=\ell_\mu^k$. Inside $\J_1$, we show a flip cyclic trapping region (Def.~\ref{def:ic}) $\cT'=\{\J'_1,\J'_2\}$, where $\J_1'=[q_1',p_1']$ and $\J'_2=[p'_1,q'_2]$.
  On the right panel, again inside $\J_1$, we show a flip cyclic trapping region $\cT''=\{\J_1'',\J_2'',\J_3'',\J''_4\}$ nested in $\cT'$, where 
  $\J''_1=[p_1'',q_1'']$, $\J''_2=[p''_2,q''_2]$, $\J''_3=[q''_3,p''_1]$ and $\J''_4=[q''_4,p''_2]$.
  The arrows in both panels show how the endpoints map under $F$. On the left, the periodic orbit at the boundary of the cyclic trapping region is the fixed point $p'_1$, on the right is the period-2 orbit $p''_1\mapsto p''_2\mapsto p''_1$.
  Notice that $F^2(\J'_1)\subset \J'_1$ and $F^4(\J''_1)\subset \J_1''$.
  The picture also includes the graphs of  $F(x)$,  $F^2(x)$ and $F^4(x)$.
  The actual value used in these pictures is $\mu=\mu_{FM}$; at this value, there is an infinite sequence of flip cyclic trapping regions nested one into the other.
  }
  \label{fig:p24ic}
\end{figure}

\bigskip

\begin{definition}
    Given a node $N$ of $\ell_\mu$, we denote by $\rho(N)$ its minimum distance from the critical point $c$. We write $N_1>N_2$ if $\rho(N_1)>\rho(N_2)$. 
\end{definition}
Notice that $\rho(N)>0$ for every repelling node. In contrast, $\rho(N)=0$ only if $N$ contains a chaotic attractor or an almost periodic attractor or a superstable periodic attractor.

We will show in the remainder of the section that $N_1>N_2$ implies that $N_1$ is upstream from $N_2$. 
For the logistic map, this is ultimately equivalent to the fact that the graph is a tower.
We start by showing that $\rho$ is injective.
\begin{proposition}
    \label{prop:rho}
    Let $N_1$ and $N_2$ be two distinct nodes. 
    Then either $N_1>N_2$ or $N_2>N_1$. 
\end{proposition}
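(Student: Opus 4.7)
The plan is to show injectivity of $\rho$: assume $\rho(N_1)=\rho(N_2)=\rho$ and deduce $N_1=N_2$. The two crucial inputs are that a node is a closed set that is forward invariant under $\ell_\mu$, together with the symmetry $\ell_\mu(x)=\ell_\mu(1-x)$ around the critical point $c=1/2$.

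\medskip

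First I would verify the preparatory facts. A node is closed (as an equivalence class of chain-recurrence under the closed relation $\sim$), so the minimum distance $\rho(N)$ is attained at some point $p\in N$. A node is forward invariant: if $x\in N$, the chain-recurrence of $x$ yields, for every $\varepsilon>0$, an $\varepsilon$-chain $x=x_0,\dots,x_n=x$ with $n\ge 1$, and then $\ell_\mu(x),x_1,\dots,x_n=x$ is an $\varepsilon$-chain from $\ell_\mu(x)$ to $x$, while $x,\ell_\mu(x)$ already shows $\ell_\mu(x)$ is downstream from $x$; hence $\ell_\mu(x)\sim x$ so $\ell_\mu(x)\in N$.

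\medskip

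Second I would split into the two cases according to the value of $\rho$. If $\rho=0$, then both $N_1$ and $N_2$ contain the critical point $c$, so they share a common element and therefore coincide. If $\rho>0$, the set of points at distance $\rho$ from $c$ in $[0,1]$ consists of at most the two points $c-\rho$ and $c+\rho$, so each node contains at least one of these two points. If $N_1$ and $N_2$ contain the same one of them, they intersect, hence are equal. Otherwise one contains $c-\rho$ and the other $c+\rho$; but by the symmetry of the logistic map,
\[
\ell_\mu(c-\rho)=\ell_\mu(1-(c-\rho))=\ell_\mu(c+\rho),
\]
so this common image lies in $N_1\cap N_2$ by forward invariance, and we again conclude $N_1=N_2$.

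\medskip

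The only place that requires a moment of care is justifying that a node is forward invariant, since the definition of chain-recurrence phrased via $\sim$ equivalence classes does not make this instantaneous; once that is in hand, everything else is a direct exploitation of the $x\mapsto 1-x$ symmetry of $\ell_\mu$ and of the fact that the minimum distance is attained. There is no real obstacle beyond bookkeeping.
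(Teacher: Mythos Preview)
Your argument is correct and is essentially the paper's own proof: both exploit forward invariance of nodes together with the symmetry $\ell_\mu(c-\rho)=\ell_\mu(c+\rho)$ to conclude that the two points at distance $\rho$ from $c$ cannot lie in distinct nodes. One small bookkeeping slip in the part you flagged: the sequence $\ell_\mu(x),x_1,\dots,x_n$ is not an $\varepsilon$-chain as written, since the first step would require $d(\ell_\mu^2(x),x_1)\le\varepsilon$ rather than $d(\ell_\mu(x),x_1)\le\varepsilon$; the standard fix is to start from a $\delta$-chain with $\delta$ chosen via uniform continuity and use $\ell_\mu(x),x_2,\dots,x_n$ instead.
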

\begin{proof}
    Let $p_0$ be a point of $N$ such that $|p_0-c|=\rho(N)$. The other point having distance $\rho(N)$ from $c$ is $1-p_0$. 
    Since $\ell_\mu(1-p_0)=\ell_\mu(p_0)\in N$, then
    $1-p_0$ is either in the node or in its preimage and therefore it cannot belong to any other node. 
    Hence, no two nodes are equidistant from $c$.
    
\end{proof}
%

%
\begin{definition}
    \label{def:p0}
    Let $N$ be a node of $\ell_\mu$ with $\rho(N)>0$. From now on, {\BF$p_0=p_0(N)$} will refer to a point in $N$ with minimal distance from $c$.
    We denote by {\BF $\J_1(N)$} the closed interval with endpoints $p_0$ and $1-p_0$. 
\end{definition}
We will later show that one and only one of the two is periodic and we will later refer to the periodic one as $p_1$. 
In Figs.~\ref{fig:ic} and~\ref{fig:p3ic} we show sets $\J_1$, together with the corresponding endpoint $p_1$, for several nodes.
\begin{proposition}[Downstream Proposition]
  \label{prop:downstream}
  Let $N$ be a node of $\ell_\mu$ with $\rho(N)>0$. Then each
  chain-recurrent point in $J_1(N)$
  is downstream from $N$.
\end{proposition}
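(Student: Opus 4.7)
The plan is to construct, for each $\varepsilon>0$, an $\varepsilon$-chain from $p_0=p_0(N)$ to $x$; since the points of $N$ are chain-equivalent, this shows $x$ is downstream from $N$. I would build the chain in three phases.

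Phase I gets from $p_0$ into the open interior of $J_1(N)$. Using that $p_0$ is chain-recurrent (as $p_0\in N$), pick an $(\varepsilon/2)$-chain $p_0=w_0,w_1,\dots,w_K=p_0$ of positive length. Replace the last element by $y_0:=p_0+\delta$, with $|\delta|\le\varepsilon/2$ small and sign chosen so that $y_0\in\text{int}(J_1(N))$; then by the triangle inequality $d(\ell_\mu(w_{K-1}),y_0)\le d(\ell_\mu(w_{K-1}),p_0)+|\delta|\le\varepsilon/2+\varepsilon/2=\varepsilon$, so $p_0=w_0,w_1,\dots,w_{K-1},y_0$ is an $\varepsilon$-chain from $p_0$ to the interior point $y_0$. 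No periodicity of $p_0$ is needed for this phase, which matters because the identification of the periodic endpoint $p_1$ inside $\{p_0,1-p_0\}$ is only established later in the paper.

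Phase II is a continuity-and-chain-recurrence reduction. Since $x$ is chain-recurrent, for $\eta=\varepsilon/2$ there is an $\eta$-chain $x=z_0,z_1,\dots,z_M=x$. By continuity of $\ell_\mu$, pick $\delta'>0$ such that $d(w,x)<\delta'$ implies $d(\ell_\mu(w),\ell_\mu(x))<\varepsilon/2$. Any $\varepsilon$-chain from $y_0$ to a point $w$ with $d(w,x)<\delta'$ can be extended to $x$ by appending $z_1,\dots,z_M$, since $d(\ell_\mu(w),z_1)\le d(\ell_\mu(w),\ell_\mu(x))+d(\ell_\mu(x),z_1)<\varepsilon/2+\varepsilon/2=\varepsilon$. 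Thus it suffices to produce, for every $\delta'>0$, an $\varepsilon$-chain from $y_0$ to some $w$ with $d(w,x)<\delta'$. The case $x=p_0$ is already handled by the trivial chain of length $0$.

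Phase III exploits the trapping property of $\cT(N)$. The forward orbit of $y_0$ stays in $\cT(N)$. If $x$ lies in the attractor sitting inside $\cT(N)$, iteration alone drives $y_0$ into any neighborhood of the attractor and, combined with the splicing of Phase II, reaches $x$. If $x$ lies in an intermediate repelling node $N'$ strictly between $N$ and the attractor, the boundary periodic orbit of $N'$ is repelling, and $\varepsilon$-perturbations at appropriate steps can push the orbit across the boundary of the nested trapping region $\cT(N')$ into its interior; iterating this descent through the (possibly infinitely many) nested trapping regions separating $y_0$ from $x$ brings the chain within $\delta'$ of $x$. The main obstacle is precisely this descent: justifying rigorously that $\varepsilon$-perturbations can always steer the orbit from outside a nested trapping region across its boundary using only the local unstable direction of the boundary repelling periodic orbit, and then controlling the limit in the almost-periodic parameter case, with infinitely many nested regions, via the adding-machine structure supplied by the Non-Wandering Theorem.
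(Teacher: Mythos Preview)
Your Phase~III relies on the existence and properties of the trapping region $\cT(N)$ --- in particular, that the forward orbit of $y_0$ stays in $\cT(N)$ and that the nested hierarchy $\cT(N)\supset\cT(N')\supset\cdots$ is available with repelling periodic orbits on the boundaries. In the paper, however, the very existence of $\cT(N)$ as a trapping region (Prop.~\ref{thm:node2ic}) is proved \emph{after} the Downstream Proposition and explicitly invokes it. At this point in the argument, $J_1(N)$ is merely the interval with endpoints $p_0$ and $1-p_0$; nothing says its iterates cycle and return. Using $\cT(N)$ here is circular. Even if you bypass this by quoting the Non-Wandering Theorem for the trapping regions $\cT_j$, you have not identified any $\cT_j$ with your $J_1(N)$, so the forward-invariance you need is still unestablished. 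A second, independent gap: when $x$ lies in an intermediate repelling node $N'$, your steering argument pushes the chain \emph{into} the interior of $\cT(N')$, but $N'$ sits on the boundary of $\cT(N')$, not in its interior --- so the descent overshoots $x$ and you give no mechanism to land on it.

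The paper's proof avoids all structure theory by a direct expansion argument. Take the one-sided interval $K_\varepsilon=[p_0,p_0+\varepsilon]$. If for some $\varepsilon>0$ no iterate $\ell_\mu^r(K_\varepsilon)$ contains $c$ in its interior, then $K_\varepsilon$ is a homterval and the Homterval Theorem applies (this case is analyzed separately). Otherwise, for every $\varepsilon>0$ some $\ell_\mu^r(K_\varepsilon)$ contains $c$; but $\ell_\mu^r(p_0)\in N$ cannot lie in the interior of $J_1(N)$ (since $p_0$ is the closest point of $N$ to $c$), so the connected interval $\ell_\mu^r(K_\varepsilon)$ must cover at least one of the halves $[p_0,c]$ or $[c,1-p_0]$. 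That immediately yields $\varepsilon$-chains from $p_0$ to every point of that half, and the symmetry $x\leftrightarrow 1-x$ (together with the observation that $x$ and $1-x$ lie in the same node) finishes the job. The key idea you are missing is precisely this: minimality of $|p_0-c|$ forces the iterate of $p_0$ to stay outside $\mathrm{int}\,J_1(N)$, so once the expanding interval captures $c$ it has already swept over half of $J_1(N)$.
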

\begin{proof}

Assume, for discussion sake, that $p_0<c$; the argument for $p_0>c$ is virtually the same. We set $K=K_\varepsilon=[p_0,p_0+\varepsilon]$. 

There are two cases. 

CASE 1: Assume that for every $\varepsilon>0$, 
for some $r>1$, $\ell^r_\mu(K_\varepsilon)$ contains $c$ in its interior.

Since $p_0$ is the closest point of $N$ to $c$, $\ell^r_\mu(p_0)$ cannot be in the interior of $\J_1(N)$. Then $\ell^r_\mu(K_\varepsilon)$ must contain at least either $[p_0,c]$ or $[c,1-p_0]$. 
Denote by $\J_0$ the half of $\J_1$ that $K_\varepsilon$ eventually maps onto for arbitrarily small $\varepsilon$. 
Hence, for arbitrarily small $\varepsilon$, there are $\varepsilon$-chains from $p_0$ to each point within $\J_0$. Notice that chain-recurrent points $x$ and $1-x$ belong to the same node and that, if $p_0$ is upstream from any point of a node, is upstream from each point of that node.



CASE 2: Assume that, for some $\varepsilon>0$ and
all $r>1$, $\ell^r_\mu(K_\varepsilon)$ does not contain $c$ in its interior. In particular, the attractor must be a periodic orbit by the Homterval Theorem.

If $N$ contains non-periodic points, let $p$ be one of them and assume, by discussion sake, that $p<c$.
Then $[p,p+\varepsilon]$ cannot be a homterval and so, for the same argument above, every point of $\J_1(N)$ is downstream from $N$. 

If all points of $N$ are periodic, then $p_0$ belongs to a period-$k$ orbit and $K_\varepsilon$ lies in a maximal homterval we denote by $H$. 
So $H$ is invariant under $\ell_\mu^{2k}$, which is orientation preserving. 
If $h\in H$ and it is not a fixed point for $\ell_\mu^{2k}$, then $h$ converges to an attractor, and there is only one attractor for $\ell_\mu$. 
So the attractor is a periodic-$2k$ orbit. 
By Singer theorem, there is an interval $H_c$ in the basin of this orbit that contains both $c$ and a point $q_0$ of the period-$2k$ orbit. 
We can assume $H_c$ is the largest open interval that is in the basin and contains $c$. 
From the construction, the boundary of $H_c$ contains a period-$k$ point $p^*$. 
The other boundary point is $1-p^*$. So $p^*$ is the closest point of $N$ to $c$. Hence, either $p^*=p_0$ or $p^*=1-p_0$ and so $\J_1(N)=H_c$.

In particular, the open interval with endpoints $q_0$ and $1-q_0$ is a subset of the basin of attraction of the attractor and therefore does not contain any chain-recurrent point. Hence, by the same argument above, all chain-recurrent points in $\J_1(N)$ are downstream from $N$.
\end{proof}
{
The proof above implies, in particular, the following:
%
%
\begin{corollary}
    Let $N$ be a node of $\ell_\mu$ with $\rho(N)>0$. Then   there is no point in the interior of $\J_1(N)$ that falls eventually into $N$.
\end{corollary}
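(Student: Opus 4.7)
The plan is to derive a contradiction by showing that such a $y$ would have to lie in $N$, in flat contradiction with $N\cap\interior J_1(N)=\emptyset$. The case $m=0$ is already immediate: by definition $\rho(N)=|p_0-c|$, so every $q\in N$ satisfies $|q-c|\ge\rho(N)$, while every $y\in\interior J_1(N)=(p_0,1-p_0)$ satisfies $|y-c|<\rho(N)$.

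For $m\ge 1$, the forward trajectory $y,\ell_\mu(y),\dots,\ell_\mu^m(y)\in N$ is an exact chain and hence an $\varepsilon$-chain for every $\varepsilon>0$, so $y$ is upstream from $N$. The plan is to also show that $y$ (or its reflection $1-y$) is downstream from $N$; once both hold, concatenating an $\varepsilon$-chain $y\to N$ with one $N\to y$ yields an $\varepsilon$-chain $y\to y$ for every $\varepsilon$, which makes $y$ chain-recurrent and equivalent to $N$, forcing $y\in N$; since $N\cap\interior J_1(N)=\emptyset$, this is a contradiction.

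For the downstream half I would reuse exactly what the proof of the Downstream Proposition just constructed, without invoking anything new. In Case~1 of that proof one explicitly produces, for every $\varepsilon>0$, an $\varepsilon$-chain from $p_0\in N$ to every point of the half $J_0\subset J_1(N)$ onto which $K_\varepsilon$ eventually maps; crucially the construction does not require the target to be chain-recurrent, so \emph{every} point of $J_0$ is downstream from $N$. If $y\in J_0$ we are done. Otherwise $y\in J_1(N)\setminus J_0$, and I apply the same conclusion to $y':=1-y$: the identity $\ell_\mu(1-x)=\ell_\mu(x)$ gives $\ell_\mu^m(y')=\ell_\mu^m(y)\in N$ for $m\ge 1$, so $y'$ is upstream from $N$ as well, while the symmetry of $J_1(N)$ about $c$ places $y'$ in $J_0\cap\interior J_1(N)$, so $y'$ is also downstream from $N$ by the previous sentence; the contradiction then falls on $y'$, which would sit in $N\cap\interior J_1(N)=\emptyset$. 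Case~2(i) of the Downstream Proposition establishes downstream for every point of $J_1(N)$ directly, so the reflection is not even needed there; in Case~2(ii), $\interior J_1(N)$ coincides with the immediate basin $H_c$ of a period-$2k$ attractor, so $\omega(y)$ would have to equal that attractor, but $\ell_\mu^m(y)\in N$ together with the periodicity of $N$ in this subcase forces $\omega(y)=N$, a different orbit, again a contradiction.

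The only nontrivial step is handling the half $J_1(N)\setminus J_0$ in Case~1; the reflection $y\leftrightarrow 1-y$ closes it cleanly because $\interior J_1(N)$ is symmetric about $c$ and $\ell_\mu$ collapses $y$ and $1-y$ after one iterate, so the upstream-plus-downstream argument transfers intact from $y$ to $y'$. No additional trapping-region machinery is required, which is exactly what one expects from the paper's remark that the corollary is implied ``in particular'' by the proof just given.
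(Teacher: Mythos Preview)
Your proof is correct and follows the paper's approach: show that such a $y$ would be both upstream and downstream from $N$, hence lie in $N$, contradicting the minimality of $\rho(N)$. You are in fact more careful than the paper's terse argument, which cites the Downstream Proposition directly even though its \emph{statement} only applies to chain-recurrent targets; your going back into that proof and using the reflection $y\mapsto 1-y$ to cover the half $J_1(N)\setminus J_0$ is precisely what the paper's prefatory remark ``the proof above implies, in particular'' is signaling.
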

\begin{proof}
    A point falling eventually on $N$ is upstream from $N$. By Prop.~\ref{prop:downstream}, all chain-recurrent points of $\J_1(N)$ are downstream from $N$, so a point in the interior of $\J_1(N)$ falling eventually into $N$ would be both upstream and downstream from $N$ and so it would belong to $N$. This is not possible because, by hypothesis, $p_0$ is the closest point of $N$ to $c$.
\end{proof}
%
\begin{definition}
    \label{def:N(T)}
    We denote by $\text{Node}(\cT)$ the node that contains $\orbit(\cT)$.
\end{definition}
The node 
$\text{Node}(\cT)$ contains no points of $\Jall(\cT)$, in fact it is the node closest to $c$ with this property.  Notice that $\rho(\text{Node}(\cT))>0$ unless $\orbit(\cT)$ is attracting and $c$ lies on it.
We are now in position to show that this map from cyclic trapping regions to nodes with strictly positive distance from $c$
can be inverted.

%
\begin{proposition}[The cyclic trapping region of a node]
\label{thm:node2ic}
  Let $N$ be a node of $\ell_\mu$ with $\rho(N)>0$. 
  Let $p_0=p_0(N)$. 
  Then:
 \begin{enumerate}
 
 \item Either $p_0$ or $1-p_0$ is a periodic point. Let $k$ denote its period.
 
 \item There is a cyclic trapping region $\{\J_1,\dots,\J_r\}$, where $\J_1=\J_1(N)$, whose period $r$ is either $k$ or $2k$.

    

  \end{enumerate}
  
\end{proposition}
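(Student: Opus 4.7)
The plan is to prove the two claims sequentially. First, I would show that $p_0$ or $1-p_0$ must be periodic; second, I would construct the cyclic trapping region using the sign of $D = (\ell_\mu^k)'(p_1)$ at the periodic endpoint.

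For part~(1), assume without loss of generality that $p_0 < c$. The forward orbit of $p_0$ lies in $N$ and avoids $(p_0, 1-p_0)$ by minimality of $\rho(N)$. I would apply the dichotomy used in the proof of Prop.~\ref{prop:downstream} to $K_\varepsilon = [p_0, p_0+\varepsilon]$: either (Case~1) for every small $\varepsilon > 0$ some iterate $\ell_\mu^{r(\varepsilon)}(K_\varepsilon)$ contains $c$ in its interior, or (Case~2) $K_\varepsilon$ is a homterval for some $\varepsilon > 0$. In Case~2, the Homterval Theorem identifies the attractor as a periodic orbit, and Singer's theorem combined with minimality of $\rho(N)$ forces $p_0$ or $1-p_0$ to lie on the boundary of its immediate basin. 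In Case~1, the image $\ell_\mu^{r(\varepsilon)}(p_0)$ lies in $N$ but sits arbitrarily close to $c$, so minimality of $\rho(N)$ forces $\ell_\mu^{r(\varepsilon)}(p_0) \in \{p_0, 1-p_0\}$; a pigeonhole argument (when $r(\varepsilon)$ stays bounded) or a structural argument leveraging the Non-Wandering Theorem (when $r(\varepsilon) \to \infty$) then promotes this to an actual periodic return.

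For part~(2), let $p_1$ denote the periodic endpoint, $k$ its period, and $F = \ell_\mu^k$. Both endpoints of $J_1$ map under $F$ to $p_1$: by periodicity $F(p_1) = p_1$, and since $\ell_\mu(1-p_1) = \ell_\mu(p_1)$, also $F(1-p_1) = \ell_\mu^{k-1}(\ell_\mu(p_1)) = p_1$. So $F(J_1)$ is a closed interval with one endpoint at $p_1$. Let $D = F'(p_1)$. If $D > 0$, $F$ preserves orientation near $p_1$, so $F(J_1) = [p_1, y]$ with $y$ on the $c$-side of $p_1$. I would rule out $y > 1-p_1$ by contradiction: otherwise the Intermediate Value Theorem produces $x \in \text{int}(J_1)$ with $F(x) = 1-p_1 \in N$, and combining this with Prop.~\ref{prop:downstream} (every chain-recurrent point of $J_1$ is downstream from $N$) promotes $x$ to a chain-recurrent point equivalent to $p_0$, contradicting the minimality of $\rho(N)$. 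Hence $F(J_1) \subseteq J_1$ and the trapping region $\{J_1, \ell_\mu(J_1), \dots, \ell_\mu^{k-1}(J_1)\}$ has period $r = k$. If $D < 0$, a computation using $\ell_\mu'(1-p_1) = -\ell_\mu'(p_1)$ gives $F'(1-p_1) = -D > 0$, so $F$ reverses orientation at $p_1$ while $F^2$ preserves it; applying the previous argument to $F^2$ yields $F^2(J_1) \subseteq J_1$, so $r = 2k$. The degenerate case $D = 0$ is excluded.

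The main obstacle I expect is the periodicity conclusion in part~(1), specifically upgrading the ``arbitrarily close to $\{p_0,1-p_0\}$'' statement to an actual periodic return when $r(\varepsilon) \to \infty$. This ultimately rests on the structural rigidity of chain-recurrent components for S-unimodal maps provided by the Non-Wandering Theorem; although each possible type of node (periodic orbit, Cantor set with cascade segment) has its closest point to $c$ lying on a periodic orbit, extracting this conclusion purely from chain-recurrence arguments requires careful bookkeeping of how preimages of points near $c$ interact with the compact forward-invariant structure of $N$.
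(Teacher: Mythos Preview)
Your plan has a real gap in part~(1), which you yourself flag: when $r(\varepsilon)\to\infty$ you have no clean way to promote ``$\ell_\mu^{r(\varepsilon)}(p_0)\in\{p_0,1-p_0\}$ for each $\varepsilon$'' to an actual periodic return, and invoking the Non-Wandering Theorem here would undercut the purpose of the proposition. The pigeonhole case ($r(\varepsilon)$ bounded) in fact cannot occur: if $r(\varepsilon_n)=r$ along a sequence $\varepsilon_n\to0$, then $\ell_\mu^r(K_{\varepsilon_n})\ni c$ while $\ell_\mu^r(K_{\varepsilon_n})\to\{\ell_\mu^r(p_0)\}$, forcing $\ell_\mu^r(p_0)=c$ and hence $\rho(N)=0$.

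The paper avoids this difficulty by reversing the order of the two parts. Rather than working with the shrinking one-sided neighborhoods $K_\varepsilon$, apply the homterval dichotomy directly to $J_1(N)$ (or to its half $J_0=J_1\cap[0,c]$). In the non-periodic-attractor case, $J_0$ is not a homterval, so there is a \emph{fixed} $k$ with $c\in\ell_\mu^k(\text{int}(J_1))$; in the periodic-attractor case, the attractor has a point $q_0\in\text{int}(J_1)$ fixed by $\ell_\mu^r$. Either way, some interior point of $J_1$ maps back into the interior under a fixed power of $\ell_\mu$. Now the corollary to the Downstream Proposition (no interior point of $J_1$ eventually lands in $N$) plus the Intermediate Value Theorem forces $\ell_\mu^k(\text{int}(J_1))\subset\text{int}(J_1)$: otherwise some interior point would map to $p_0$ or $1-p_0$. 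Periodicity then drops out for free: by continuity $\ell_\mu^k(p_0)\in J_1$, by forward invariance $\ell_\mu^k(p_0)\in N$, and since $N\cap\text{int}(J_1)=\emptyset$ we get $\ell_\mu^k(p_0)\in\{p_0,1-p_0\}$.

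Your part~(2) argument via the sign of $D$ is correct once part~(1) is in hand, and is a legitimate alternative to the paper's more uniform treatment; but the paper's route makes the derivative split unnecessary, since invariance of $J_1$ under the relevant power is obtained directly and the period $r\in\{k,2k\}$ can be read off afterwards.
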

Notice that $\ell_\mu^r(p_0)=\ell_\mu^r(1-p_0)$ for all $r\geq1$.
%
\begin{proof}
Let $\J_0=\J_1(N)\cap[0,c]$ and denote by $\inter{J}_1$ the interior of $\J_1(N)$.

Assume first that the attractor is chaotic or almost periodic. Then there cannot be homtervals and, by the Homterval Theorem,  there is some $k\geq1$ such that $\ell_\mu^k(\J_0)=\ell_\mu^k(\J_1)$ contains $c$ in its interior. 
In other words, there is a point $x$ in $\inter{J}_1$ such that $\ell^k_\mu(x)=c\in\inter{J}_1$.

We claim that this is enough to grant that $\inter{J}_1$ is invariant under $\ell^k_\mu$. Indeed, by continuity, if there were a $y\in\inter{J}_1$ with $\ell^k_\mu(y)\not\in\inter{J}_1$, there would be some $\xi\in\inter{J}_1$ between $x$ and $y$ such that either $\ell^k_\mu(\xi)=p_0$ or $\ell^k_\mu(\xi)=1-p_0$. This, though, is impossible because, by our Downstream Proposition, there are no preiterates of $N$ in $\inter{J}_1$.
Moreover, by continuity, the common value of $p_0$ and $1-p_0$ under $\ell^k_\mu$ must belong to $\J_1(N)$ and, since $N$ is forward invariant and there are no points of $N$ in $\inter{J}_1$, it can only be either $p_0$ or $1-p_0$, so the theorem is proved in this case. 

Assume now that the attractor is a period-$r$ orbit. Then (see the corollary above) there is a point $q_0$ of the attractor in $\J_1$. Hence $\ell_\mu^{r}(q_0)=q_0$ and so, as above, there is a point in $\inter{J}_1$ whose image under $\ell_\mu^{r}$ belongs to $\inter{J}_1$.
By the argument above, $\ell_\mu^{r}$ leaves both $\inter{J}_1$ and $J^1(N)$ invariant, proving the theorem in this second case. 

We have left it to the reader to show that the intervals $J_1,\ell_\mu(J_1),\dots,\ell^{r-1}_\mu(J_1)$ have disjoint interiors.

\end{proof}
\begin{definition}
   We denote by {\BF$\cT(N)$} the trapping region with $\J_1=\J_1(N)$. 
\end{definition}
Notice that $\cT(N)$ is the largest cyclic trapping region that does not contain $N$ in its interior.

A byproduct of Thm.~\ref{thm:node2ic} is that either $p_0$ or $1-p_0$ is a periodic point: 
\begin{corollary}
  The minimum distance between a repelling node $N$ and $c$ is achieved at the periodic point $p_1(\cT(N))$.
\end{corollary}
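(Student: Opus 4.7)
The statement is essentially a repackaging of Theorem~\ref{thm:node2ic}, so the plan is to show that the one of $\{p_0, 1-p_0\}$ that is periodic must lie in $N$ itself (and therefore can be chosen as $p_0(N)$, at which point it coincides with the periodic endpoint $p_1(\cT(N))$ of $J_1(N)=J_1(\cT(N))$).

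First I would invoke Theorem~\ref{thm:node2ic}(1): either $p_0$ or $1-p_0$ is periodic of some period $k$. If $p_0$ itself is periodic, we are immediately done, since $p_0\in N$ by Def.~\ref{def:p0} and $p_0$ is then the periodic endpoint of $J_1(\cT(N))$, i.e., $p_0=p_1(\cT(N))$. The substantive case is when $1-p_0$ is the periodic one. Using the symmetry $\ell_\mu(p_0)=\ell_\mu(1-p_0)$ of the logistic map about $c$, one gets $\ell_\mu^k(p_0)=\ell_\mu^{k-1}(\ell_\mu(1-p_0))=\ell_\mu^k(1-p_0)=1-p_0$. Hence the forward orbit of $p_0$ lands on $1-p_0$ after $k$ steps.

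Next I would observe that nodes are forward invariant: if $x\in N$ then $\ell_\mu(x)\in N$. This is a routine consequence of the definition of $\sim$ — given any $\varepsilon$-chain from $x$ to $x$, one obtains an $\varepsilon$-chain from $\ell_\mu(x)$ to $x$ by dropping the first step (using uniform continuity of $\ell_\mu$ to convert a $\delta$-chain into an $\varepsilon$-chain). Applied inductively, this gives $\ell_\mu^k(p_0)\in N$, hence $1-p_0\in N$. Since $|1-p_0-c|=|p_0-c|=\rho(N)$, the periodic point $1-p_0$ is itself a minimum-distance point of $N$ from $c$. Since Def.~\ref{def:p0} only fixes $p_0$ as \emph{some} minimum-distance point in $N$, we may (and do) replace $p_0$ by $1-p_0$, so that $p_0$ is now periodic; this is precisely the point called $p_1(\cT(N))$ in Def.~\ref{def:ic}.

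There is no real obstacle to the argument — everything reduces to the symmetry identity $\ell_\mu(p_0)=\ell_\mu(1-p_0)$ combined with the forward invariance of nodes and Theorem~\ref{thm:node2ic}. The only point to be careful about is that $\rho(N)>0$ (which is assumed, since $N$ is repelling), ensuring $p_0\neq c$ so that $p_0$ and $1-p_0$ are genuinely distinct and the symmetry argument has content.
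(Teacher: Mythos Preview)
Your argument is correct and is essentially the same as what the paper intends: the paper gives no separate proof for this corollary, treating it as an immediate byproduct of Proposition~\ref{thm:node2ic}, whose proof already shows that the common value $\ell_\mu^k(p_0)=\ell_\mu^k(1-p_0)$ lies in $N$ (by forward invariance) and equals either $p_0$ or $1-p_0$, hence the periodic one is in $N$. You have simply unpacked that reasoning explicitly, using the same two ingredients (the symmetry $\ell_\mu(p_0)=\ell_\mu(1-p_0)$ and forward invariance of nodes).
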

We sometimes denote that periodic point by {\BF $p_1(N)$}.
In Fig.~\ref{fig:ic} we show all cyclic trapping regions corresponding to three parameters values at the left of the Feigenbaum-Myrberg point. 
The nodes corresponding to each trapping region are flip periodic orbits and therefore the trapping regions are flip cyclic trapping regions.

At $\pone0$, there are two repelling nodes $N_0$ and $N_1$, namely the fixed point $0$ and the fixed point $p_1$, and an attracting node $N_2$ consisting of a period-2 orbit $p'_1=p_1(N_2),p'_2$. 
Since $p_1$ is a flip orbit, $\cT(N_1)=\{\J_1,\J_2\}$ has period 2. 
The picture shows, in red, the intervals $\J_1=\J_1(N_1)$, with endpoints $p_1=p_1(N_1)$ and $q_1=1-p_1$, and $\J_2$, with endpoints $p_1$ and $q_2$, which is the larger root of
$\ell_{\pone0}(x)=q_1$.

As $\mu$ increases from $\pone0$ to $\pone1$, the orbit $p_1',p_2'$ bifurcates and becomes repelling, so that at $\pone1$ there is now an attracting node $N_3$ consisting of a period-4 orbit $p''_1=p_1(N_3),\dots,p''_4$. 
The picture shows, again in red, the intervals of $\cT(N_1)$. Moreover it shows, in blue, the intervals $\J'_1=\J_1(N_2),\dots,\J'_4$ of $\cT(N_2)$.
Similarly, as $\mu$ increases from $\pone1$ to $\pone2$, the period-4 orbit bifurcates and becomes repelling, so that at $\pone2$ there is now a period-8 attracting orbit. Here the picture shows, again in red and blue respectively, the intervals of $\cT(N_1)$ and $\cT(N_2)$. Moreover it shows, in olive green, the intervals $\J''_1=\J_1(N_3),\dots,\J''_8$ of $\cT(N_3)$.

In Fig.~\ref{fig:p3ic}, we show some example of regular cyclic trapping regions for some parameter values in the period-3 window. 
Notice that the picture shows only a detail close to the central cascade of the window.

At $\pthree1$, there are two repelling nodes, $N_0=\{0\}$ and $N_1$, the red Cantor set (see Figs.~\ref{fig:full},~\ref{fig:p3}), and an attracting one consisting in a period-3 orbit. 
The picture shows, in red, the interval $\J_1=\J_1(N_1)$ of the regular period-3 cyclic trapping region $\cT(N_1)$, whose endpoints are $p_1=p_1(N_1)$ and $q_1=1-p_1$.
While increasing from $\pthree1$ to $\pthree2$, the period-3 orbit bifurcates so that at $\pthree2$ we have now an attracting node $N_3$ consisting of a period-6 orbit. 
The picture shows the interval $\J_1(N_1)=[q_1,p_1]$ and the intervals $\J_1'=\J_1(N_2)=[p'_1,q'_1]$, where $p'_1=p_1(N_2)$ and $q'_1=1-p_1'$, and $\J'_4=[q'_4,p'_1]$ of the period-6 cyclic trapping region $\cT(N_2)$.

Finally, at $\pthree2$, there are three repelling nodes $N_0,N_1,N_2$, namely the fixed point 0, the red Cantor set and the blue Cantor set, and an attracting node $N_4$ consisting of a period-9 orbit. The picture shows the interval $\J_1(N_1)=[p_1,q_1]$ of the regular period-3 cyclic trapping region $\cT(N_1)$ and, in blue, the intervals $\J'_1=\J_1(N_2)$, not labeled in figure, $\J'_4,\J'_7$ of the regular period-9 cyclic trapping region $\cT(N_2)$.

\allblack


Since Prop.~\ref{thm:node2ic} shows that $\orbit(\cT_N)\subset N$, we have the following.
\begin{corollary}
\label{cor:pairing}
    Let $N$ be a node with $\rho(N)>0$. Then $\text{Node}(\cT_N)=N$.
\end{corollary}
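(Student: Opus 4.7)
The plan is to unwind the definitions and show that the periodic orbit distinguished by $\cT(N)$ lies entirely inside $N$; since nodes are equivalence classes of chain-recurrent points, this forces $\mathrm{Node}(\cT(N))=N$.

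First, I would invoke Proposition~\ref{thm:node2ic} (and its immediate corollary) to conclude that the closest point $p_1=p_1(N)$ of $N$ to $c$ is a periodic point of period $k$, and that $\J_1(N)$ together with its iterates forms the cyclic trapping region $\cT(N)$ whose distinguished boundary orbit $\orbit(\cT(N))$ is precisely the orbit of $p_1$. Thus, by Definition~\ref{def:ic}, the orbit $\orbit(\cT(N))$ is nothing other than $\{p_1,\ell_\mu(p_1),\dots,\ell_\mu^{k-1}(p_1)\}$.

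Next I would argue that this entire periodic orbit is contained in $N$. Since periodic points are non-wandering, hence chain-recurrent, every iterate $\ell_\mu^j(p_1)$ lies in $\RF$. Moreover, for any $j$ we have both $p_1 \sim \ell_\mu^j(p_1)$ and $\ell_\mu^j(p_1) \sim p_1$: the forward chain is just the orbit segment $p_1,\ell_\mu(p_1),\dots,\ell_\mu^j(p_1)$, which is an exact orbit and therefore a $\varepsilon$-chain for every $\varepsilon>0$, and the reverse chain is obtained by continuing around the periodic orbit from $\ell_\mu^j(p_1)$ back to $p_1$. Hence every $\ell_\mu^j(p_1)$ is in the same equivalence class as $p_1$, i.e., all of $\orbit(\cT(N))$ lies in $N$.

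Finally, by Definition~\ref{def:N(T)}, $\mathrm{Node}(\cT(N))$ is the (unique) node containing $\orbit(\cT(N))$. Because the nodes partition $\RF$ into disjoint equivalence classes and $N$ is a node that also contains $\orbit(\cT(N))$, we conclude $\mathrm{Node}(\cT(N))=N$. I do not anticipate a real obstacle here — the only subtlety is in Proposition~\ref{thm:node2ic}, which has already been established; once one knows that $p_1(N)$ is periodic and is the boundary orbit of $\cT(N)$, the corollary is a direct consequence of the fact that a periodic orbit cannot be split between two distinct nodes.
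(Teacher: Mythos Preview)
Your proof is correct and follows essentially the same approach as the paper: the paper simply observes that Proposition~\ref{thm:node2ic} gives $\orbit(\cT_N)\subset N$, and then the corollary is immediate from Definition~\ref{def:N(T)}. You have just spelled out in more detail why the full periodic orbit of $p_1(N)$ lies in the single node $N$, which the paper leaves implicit.
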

%
%
%
Notice that, in particular, since every cyclic trapping region $\cT$ is characterized by the periodic orbit at its boundary, the number of cyclic trapping regions of a given $\ell_\mu$ is bounded from above by the number of periodic orbits of the map. While this bound is trivial for $\mu\geq\mu_{FM}$, where there are always infinitely many periodic orbits, it is actually sharp for $\mu<\mu_{FM}$. In this case, indeed, for each $\mu$ there is a $k\geq0$ such that the set of all periodic orbits for $\ell_\mu$ consists in a single orbit of period $2^i$ for $i=0,\dots,k$. 
\begin{definition}
    We say that two repelling nodes $N_1>N_2$ are {\bf consecutive} if there is no node $N_3$ such that $N_1>N_3>N_2$. 
\end{definition}
\begin{proposition}
    \label{prop:k'=Ck}
    Let $N_1>N_2$ be two repelling nodes of $\ell_\mu$. 
    Then: 
    \begin{enumerate}
        \item  $|\cT(N_2)|>|\cT(N_1)|$ and $|\cT(N_1)|$ divides $|\cT(N_2)|$;
        \item $N_2\subset\Jall(N_1)\setminus\Jall(N_2)$;
        \item if $N_1$ and $N_2$ are consecutive, $N_2$ is the set of all chain-recurrent points in $\Jall(N_1)\setminus\Jall(N_2)$.
    \end{enumerate}
\end{proposition}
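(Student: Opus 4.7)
I plan to handle the three items in the order (2), (1), (3), since (2) gives the geometric placement of $N_2$ inside $\cT(N_1)$ that is used in both of the others. For (2), the disjointness $N_2\cap\Jall(N_2)=\emptyset$ extends the corollary to the Downstream Proposition: any point in $\inter{J_i(N_2)}$ iterates forward into $\inter{J_1(N_2)}$ by the cyclic structure of $\cT(N_2)$, and the corollary then forbids its orbit from entering $N_2$; combined with forward invariance of $N_2$, no point of $N_2$ can lie in $\Jall(N_2)$. For $N_2\subset\Jall(N_1)$, the minimum-distance point $p_1(N_2)$ sits in $\inter{J_1(N_1)}$ because $\rho(N_2)<\rho(N_1)$, and its $\ell_\mu$-orbit is periodic and disjoint from $\orbit(\cT(N_1))\subset N_1$, so the whole orbit lies in $\Jall(N_1)$. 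For any other $y\in N_2$, $y\sim p_1(N_2)$ together with forward invariance of $\cT(N_1)$ confines $y$ to $\cT(N_1)$, and $y\notin\partial\cT(N_1)$ because $\partial\cT(N_1)$ consists only of $\orbit(\cT(N_1))\subset N_1$ (disjoint from $N_2$) and preperiodic points eventually falling into $N_1$ (incompatible with forward invariance of $N_2$).

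For (1), divisibility $|\cT(N_1)|\mid |\cT(N_2)|$ follows directly from the cyclic action: starting at $p_1(N_2)\in\inter{J_1(N_1)}$, the orbit advances one interval $J_i(N_1)$ per $\ell_\mu$-step and returns to $J_1(N_1)$ only at multiples of $k_1:=|\cT(N_1)|$, so $k_1$ divides the $\ell_\mu$-period of $p_1(N_2)$, which equals $k_2:=|\cT(N_2)|$ if $\cT(N_2)$ is regular and $k_2/2$ if flip. For the strict inequality $k_2>k_1$ I study the first-return map $F=\ell_\mu^{k_1}|_{J_1(N_1)}$. The cyclic structure forces $c\in\inter{J_1(N_1)}$ and $c\notin J_{i+1}(N_1)$ for $1\le i\le k_1-1$, so no $x\in J_1(N_1)\setminus\{c\}$ satisfies $\ell_\mu^i(x)=c$ for such $i$; combined with $F(p_1(N_1))=F(1-p_1(N_1))=p_1(N_1)$, this makes $F$ unimodal on $J_1(N_1)$ with boundary fixed point $p_1(N_1)$ and unique critical point $c$. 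If $k_2=k_1$ with $\cT(N_2)$ flip, then $k_1\mid k_2/2$ forces $k_2\ge 2k_1$, a contradiction; if $\cT(N_2)$ is regular, $p_1(N_2)$ is an interior fixed point of $F$ with $F'(p_1(N_2))>1$, but $F'(p_1(N_1))>1$ leaves the graph of $F$ above the diagonal past $p_1(N_1)$, so any interior fixed point either crosses from above to below the diagonal on the increasing branch (forcing derivative $\le 1$) or sits on the decreasing branch (forcing negative derivative), contradicting $F'(p_1(N_2))>1$.

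For (3), one inclusion is (2). For the converse, let $x$ be a chain-recurrent point in $\Jall(N_1)\setminus\Jall(N_2)$ in its node $N_x$: $N_x\ne N_1$ by $N_1\cap\Jall(N_1)=\emptyset$; $N_x$ cannot be strictly above $N_1$ because iterating (2) up the tower shows every node higher than $N_1$ is disjoint from $\Jall(N_1)$; $N_x$ cannot be strictly below $N_2$ because (2) applied to $(N_2,N_x)$ would give $N_x\subseteq\Jall(N_2)$, contradicting $x\notin\Jall(N_2)$; and $N_2<N_x<N_1$ is excluded by the consecutiveness hypothesis. Hence $N_x=N_2$. The main obstacle is the strict inequality in (1): showing that $F=\ell_\mu^{k_1}|_{J_1(N_1)}$ is genuinely unimodal depends on the pairwise disjointness of the interiors of the $J_i(N_1)$ and on non-super-stable parameter values (where $c$ could become periodic and land on $\partial J_{i+1}(N_1)$), after which the derivative bound at interior fixed points is an elementary graph-geometry observation.
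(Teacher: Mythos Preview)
Your overall strategy coincides with the paper's: divisibility by tracking return times of $p_1(N_2)$ to $J_1(N_1)$ (the paper phrases this as ``the same number of $J'_i$ in each $J_i$''), the containments in (2) via forward invariance of the two trapping regions together with the description of $\partial\cT(N_1)$, and (3) by producing a node strictly between $N_1$ and $N_2$. Reordering (2) before (1) is harmless.

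There is, however, a real gap in your argument for the strict inequality $k_2>k_1$ in the regular case. You claim that ``any interior fixed point [of $F=\ell_\mu^{k_1}|_{J_1(N_1)}$] crosses from above to below the diagonal on the increasing branch (forcing derivative $\le 1$)''. This is only true for the \emph{first} interior fixed point past $p_1(N_1)$: once the graph of $F$ dips below the diagonal it may cross back up at a later fixed point with derivative $>1$, and nothing you have said excludes $p_1(N_2)$ from being such a second crossing. So your graph-geometry step does not force $F'(p_1(N_2))\le 1$.

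The paper closes this differently. The same intermediate-value reasoning you outline (using $F'(p_1(N_1))>1$ and $F'(p_1(N_2))>1$) produces a fixed point $x^*$ strictly between $p_1(N_1)$ and $p_1(N_2)$ with $0<F'(x^*)\le 1$, hence an attracting period-$k_1$ orbit. Its node $N_3$ then satisfies $N_2\subset\Jall(N_3)$, and since $N_3$ is the attractor, $\Jall(N_3)$ lies in its basin; a repelling node cannot sit inside the basin of the attractor, giving the contradiction. You can repair your proof by keeping your graph-geometry to locate $x^*$, and then invoking this attractor/basin argument instead of trying to pin down $p_1(N_2)$ among the crossings.
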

\begin{proof}
    {\bf 1.} 
    Set $\cT(N_1)=\{\J_1,\dots,\J_k\}$ and
    $\cT(N_2)=\{\J'_1,\dots,\J'_{k'}\}$.
    Since the maps $\ell_\mu:\J_i\to \J_{i+1}$,
    $i=2,\dots,k$ are all homeomorphisms, where we set $\J_{k+1}=\J_1$, then inside each $\J_i$ there must be the same number of intervals $\J'_i$, namely $k'=mk$ for some integer $m\geq1$.
    
    Suppose now that $k'=k$ and assume, for discussion sake, that $p_1(N_1)<c$.
    Then, since $\ell^k_\mu(\J'_1)\subset \J'_1$, both $p_1(N_1)$ and $p_1(N_2)$ are fixed points for $\ell^k_\mu$.

    If $p_1(N_2)<c$ as well, then $N_2$ would be an attracting node. Indeed, if it were not, there would be an attractor point $x$, belonging to an attracting node $N_3$, between $p_1(N_1)$ and $p_1(N_2)$. 
    Hence $N_2$ would be inside $\Jall(N_3)$ but, since $N_3$ is an attractor, $\Jall(N_3)$ is a subset of the basin of $N_3$.
    
    If, on the contrary, $p_1(N_2)>c$, then the derivative of $\ell^k_\mu$ at $p_1(N_2)$ would be negative and therefore $\J'_1$ cannot be invariant under $\ell^k_\mu$.
    

    {\bf 2.} Since $N_1>N_2$, there is at least one point of $N_2$ inside $\Jall(N_1)$ and therefore, since both $\Jall(N_1)$ and $N_2$ are forward-invariant, the whole $N_2$ must be contained in $\Jall(N_1)$. 
    Moreover, by construction, no point of $N_2$ can fall onto $\Jall(N_2)$.
    
    {\bf 3.} Suppose that there is a chain-recurrent point $x\in\Jall(N_1)\setminus\Jall(N_2)$ not belonging to $N_2$. Denote by $N_3$ the node $x$ belongs to.
    Without loss of generality we can assume that $x\in \J_1(N_1)$. 
    Then its trajectory under $\ell^k_\mu$ remains in $\J_1(N_1)$ and never enters $\J_1(N_2)$ by recurrence. 
    Hence the closest point of $N_3$ is bigger than $\rho(N_1)>\rho(N_3)>\rho(N_2)$. 
\end{proof}

There is an immediate important corollary of the proposition above.
\begin{corollary}
  For every repelling node $N$, there are only finitely many nodes $N'$ such that $N'>N$.
\end{corollary}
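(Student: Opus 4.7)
The plan is to reduce the corollary to a finite divisor count by invoking Proposition~\ref{prop:k'=Ck}(1) as the main engine. Write $k=|\cT(N)|$, which is a finite positive integer. For any \emph{repelling} node $N'$ with $N'>N$, part (1) of that proposition forces $|\cT(N')|$ to be a proper divisor of $k$, so $|\cT(N')|$ takes values in a finite set of cardinality at most $d(k)-1$, where $d(k)$ denotes the number of positive divisors of $k$.

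Next I would verify that the map $N'\mapsto |\cT(N')|$ is injective on the collection of repelling nodes lying above $N$. Given any two distinct repelling nodes $N_1',N_2'$ above $N$, Proposition~\ref{prop:rho} supplies a strict comparison between them, say $N_1'>N_2'$; Proposition~\ref{prop:k'=Ck}(1) then forces $|\cT(N_1')|<|\cT(N_2')|$, and in particular the two periods differ. Hence the number of repelling nodes above $N$ is at most $d(k)-1$, which is finite.

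Finally I would dispose of the possibility of an \emph{attracting} node above $N$. By the Attractor Theorem, $\ell_\mu$ has a unique attractor, and in each of the three listed cases (periodic orbit, chaotic trapping region with a dense trajectory, almost periodic adding machine on a Cantor set) the attractor consists of a single chain-equivalence class, so there is at most one attracting node in total. It therefore contributes at most one additional element to $\{N':N'>N\}$, and adding this to the repelling count yields a finite bound.

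The main obstacle, such as it is, is purely bookkeeping: Proposition~\ref{prop:k'=Ck} is stated only for pairs of \emph{repelling} nodes, so the attracting case must be handled separately via uniqueness of the attractor. Beyond that cautionary note, the corollary is an essentially immediate consequence of the divisor-chain structure already established for $|\cT(\cdot)|$ under the ordering $>$.
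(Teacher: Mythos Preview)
Your argument is correct and is precisely the one the paper has in mind: the corollary is stated without proof as ``immediate'' from Proposition~\ref{prop:k'=Ck}, and your divisor bound on $|\cT(\cdot)|$ together with the injectivity of $N'\mapsto|\cT(N')|$ is exactly how that immediacy unpacks. Your separate handling of a possible attracting node above $N$ is a legitimate bit of bookkeeping the paper leaves implicit; it could be shortened by noting that the unique attractor equals $\omega(c)$ and hence, as an $\omega$-limit set, is chain-transitive and lies in a single node.
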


{
\begin{proposition}[{\bf Repelling Nodes}]
    \label{thm:repelling}
    Every repelling node of $\ell_\mu$ is either a flip periodic orbit or a Cantor set.
\end{proposition}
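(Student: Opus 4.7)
The plan is to apply Proposition~\ref{thm:node2ic} to extract from $N$ its cyclic trapping region $\cT(N)$ with boundary orbit $P = \orbit(\cT(N)) \subseteq N$, and then to split according to whether $\cT(N)$ is regular or flip (Definition~\ref{def:regflip}).

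In the flip case, $\cT(N)$ has period $2k$ and $P$ is a flip orbit of period $k$; I claim $N = P$. Every $x \in N$ satisfies $|\ell_\mu^j(x) - c| \geq \rho(N)$ for all $j \geq 0$, so no forward iterate lands in $\interior(\J_1(N))$. Since $\ell_\mu$ cycles the intervals $\J_1, \dots, \J_{2k}$, any $x \in \J_i$ with $i > 1$ has the iterate $\ell_\mu^{2k-i+1}(x)$ lying in $\J_1(N)$, and hence forced onto $\{p_1(N), 1-p_1(N)\}$, so $x$ is pre-periodic to $P$. To discard such preimages from $N$, I would use the fact that in the flip case the dynamics inside $\cT(N)$ is governed either by an attracting period-$2k$ orbit (in which case the Homterval Theorem places the preimages in the basin of that attractor, hence in a different node) or by a strictly nested cyclic trapping region $\cT' \subsetneq \cT(N)$ whose own node is strictly downstream from $N$ by Proposition~\ref{thm:node2ic}. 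The Downstream Proposition (Proposition~\ref{prop:downstream}) then rules out chain-equivalence of such points to $p_1(N)$.

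In the regular case, $P$ is a regular period-$k$ orbit and I would invoke the Non-Wandering Theorem. The decomposition $\Omega_{\ell_\mu} = \bigcup_j \Omega_j$ has each $\Omega_j$ (for $0 < j < p$) equal to $C_j \cup (\text{cascade segment})$. The period-doubled orbits in the cascade segment are flip and, by the flip case, each forms its own node; hence $N$ must contain the Cantor set $C_j$ of the component $\Omega_j$ containing $P$. The topologically transitive subshift of finite type action on $C_j$ is chain-transitive, so $C_j$ lies in a single chain-equivalence class, namely $N$, and in particular $C_j \subseteq N$. Conversely, any chain-recurrent point outside $C_j$ belongs either to a different component $\Omega_{j'}$ (hence to a different node by the same analysis) or to the cascade segment of $\Omega_j$ (hence to a separate flip-orbit node by the flip case); in either case it is not chain-equivalent to $p_1(N)$. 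Therefore $N = C_j$, a Cantor set.

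The main obstacle I anticipate is the chain-recurrence analysis in the flip case: rigorously ruling out that a preimage of $P$ strictly inside $\J_2 \cup \dots \cup \J_{2k}$ is chain-equivalent to $p_1(N)$. Because chain-recurrence only requires $\varepsilon$-pseudo-returns rather than actual returns, the argument has to show, using the nested trapping region structure together with the Homterval Theorem, that small perturbations released near $P$ are invariably absorbed by the downstream attractor or nested sub-trapping region and never steered back to a prescribed preimage of $P$ off $P$ itself.
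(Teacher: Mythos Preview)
Your approach differs substantially from the paper's, and the flip case has a real gap. You argue that any $x\in N\cap\J_i$ with $i>1$ is pre-periodic to $P$, but you never explain why every point of $N$ must lie in $\text{cl}(\Jall(\cT(N)))$ in the first place. By construction $N$ avoids the \emph{interior} of $\Jall(\cT(N))$; nothing you have said rules out points of $N$ lying \emph{outside} $\text{cl}(\Jall(\cT(N)))$ altogether (which is exactly what happens when $\cT(N)$ is regular and $N$ is a Cantor set). The paper resolves this localization problem by working not with $\cT(N)$ alone but with the pair of consecutive nodes $N_i>N_{i+1}=N$: Proposition~\ref{prop:k'=Ck}(3) identifies $N_{i+1}$ as precisely the chain-recurrent set in $\Jall(\cT(N_i))\setminus\Jall(\cT(N_{i+1}))$, which bounds $N$ from above as well as from below. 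The analysis then reduces to the single interval $\J_1(N_i)$ under $\bar\ell=\ell_\mu^{|\cT(N_i)|}$, where the flip trapping region is just $\cS=\{\J_1,\J_2\}$, and a direct computation of successive preimages of $\cS$ shows that the complement of the points eventually falling into $\Jall(\cS)$ accumulates only on $\partial\J_1(N_i)$, leaving the flip orbit as the only chain-recurrent content. This sidesteps entirely the $\varepsilon$-chain obstacle you flag at the end.

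In the regular case you lean on van Strien's Non-Wandering Theorem to import the Cantor set $C_j$, whereas the paper gives a self-contained argument: within $\J_1(N_i)$, the regular trapping region $\cS$ has pairwise disjoint intervals, and the paper checks directly that any two connected components of preimages $\bar\ell^{-p}(\J_i)$, $\bar\ell^{-q}(\J_j)$ either are nested or have disjoint closures, so the set of points never entering $\Jall(\cS)$ is the complement of a dense family of open intervals with pairwise disjoint closures, hence a Cantor set. Your route via the Non-Wandering Theorem could be made to work, but you would still need to justify that the regular boundary orbit $P$ actually sits inside $C_j$ (not in the cascade segment), and to match the paper's nodes with van Strien's $\Omega_j$ at the parameter values where they intersect; the paper's direct argument avoids both bookkeeping issues.
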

\begin{proof}
    Let $N_1,N_2,\dots$ be the set of nodes of $\ell_\mu$ ordered so that $\rho(N_i)>\rho(N_j)$ if $i<j$. 
    Then, $N_i$ is the set of chain-recurrent points in $\Jall(N_i)\setminus\Jall(N_{i+1})$.
    Denote by $\bar\ell$ the power $|\cT(N_i)|$ of $\ell_\mu$. We can always reduce the problem of the structure of the set $N_{i+1}$ to the problem of the structure of the set of points of the single interval $\J_1(N_i)$ not falling, under $\bar\ell$, on the cyclic trapping region $\cS=\{\J_1,\dots,\J_k\}$ of $\J_1(N_i)$ of period $k=|\cT(N_{i+1})|/|\cT(N_i)|$. Note that $\J_1=\J_1(N_{i+1})$.
    
    Assume first that
    $\cS$
    is regular. Then, in particular:
    \begin{enumerate}
        \item $\J_i\cap \J_j=\emptyset$ for $i\neq j$;
        \item for all $i\neq 1$, the set $\bar\ell^{-1}(\J_{i+1})$, where $k+1$ is meant mod $k$, is the disjoint union of $\J_i$ and a second interval $\hat \J_i$, on each of which $\ell_\mu$ restricts to a diffeomorphism with $\J_i$;
        \item $\bar\ell^{-1}(\J_{2})=\J_1$. In this case every point of $\J_1$ is covered by two points of $\J_2$ except for the critical point $c$. 
    \end{enumerate}
    Now, take any two intervals $A$ and $B$ that are connected components of, respectively, $\bar\ell^{-p}(\J_i)$ and $\bar\ell^{-q}(\J_j)$. Assume, to avoid trivial cases, that neither $A$ nor $B$ are equal to $\J_1$. 
    Then, after enough iterations of $\bar\ell$, say $r$, $\bar\ell^r(A)$ and $\bar\ell^r(B)$ will be subsets, respectively, of two intervals $\J_{i_A}$ and $\J_{i_B}$ and at least either $\bar\ell^r(A)$ or $\bar\ell^r(B)$ will be actually equal to that interval.
    
    If $i_A\neq i_B$, then $\bar\ell^r(A)\cap\bar\ell^r(B)=\emptyset$ and therefore also $A\cap B=\emptyset$. 
    If $i_A=i_B=i$, it means that $A$ and $B$ are backward iterates of the same $\J_i$. 
    In that case, after applying enough times $\bar\ell$, we will have that $\J_i\cap\bar\ell^{rk}(\J_i)\neq\emptyset$ for some positive integer $r$. 
    The only non-trivial case is when this intersection is a single point. In this case, the common point must be the periodic endpoint, but this can happen only if $\cS$ is a flip cyclic trapping region, against the hypothesis.
    
    
    Ultimately, therefore, the set of points of $\J_1(N_i)$ that never fall in $\Jall(\cS)$ is the complement of a countable dense set of open intervals whose closures are pairwise disjoint.
    Hence, it is a Cantor set and the action of $\bar\ell$ on it is a subshift of finite type. 
    The node $N_{i+1}$ is a closed invariant subset of a finite union of such Cantor sets, and therefore is itself a Cantor set.
    

    
    Assume now that $\cS$ is a flip cyclic trapping region. Then $\cS=\{\J_1,\J_2\}$, with $\J_1=\J_1(N_{i+1})$.
    Hence, $\cS$ is as in Fig.~\ref{fig:p24ic}, namely $\J_1=[q_1,p_1]$ and $\J_2=[p_1,q_2]$, where $p_1=p_1(N_{i+1})$ is fixed for $\bar\ell$.
    Then $\bar\ell^{-1}(\J_1)$ is the disjoint union of $\J_2$ and the interval $A=[\hat q_1,q_1]$, where $\hat q_1$ is the counterimage of $q_1$ at the left of $c$, while $\bar\ell^{-1}(\J_2)=\J_1$. The two counterimages of $A$ are the intervals $A_1=[\doublehat{q}_1,\hat q_1]$ and $A_2=[\bar q_1,\bar{\bar{q}}_1]$, where $\doublehat{q}_1$ is the counterimage of $\hat q_1$ at the left of $c$, $\bar{\bar{q}}_1$ the one at the right of $c$ and $\bar{q}_1=q_2$ is the counterimage of $q_1$ at the right of $c$. Similarly, at every new recursion step, two new intervals arise, one at the left of $c$ and having an endpoint in common with the interval at the left of $c$ obtained at the previous recursion level and one at the right with similar properties. 
    
    Ultimately, then, the set of points of $\J_1(N_{i})$ that do not fall eventually in $\Jall(N_{i+1})$ under $\bar\ell$ is the union of the fixed point $p_1(N_i)$ together with all of its counterimages under $\bar\ell$. These counterimages can be sorted into two subsequences which converge monotonically to the endpoints of $\J_1(N_i)$. Hence, in this case $N_{i+1}$ consists exactly in the flip periodic orbit through $p_1(N_{i+1})$.

\end{proof}
}
%

    The next proposition asserts that each attracting node $A$ of $\ell_\mu$ is one of the following five types.
    \begin{enumerate}[label=(\subscript{A}{{\arabic*}})]
        \item An attracting periodic orbit.
        \item A trapping region; here the attractor is a finite collection of intervals.
        \item An attracting Cantor set; this is the attracting node when the number of nodes is infinite;
        \item A repelling Cantor set containing a 1-sided attracting periodic orbit.
        This occurs precisely for those $\mu$ for which there is a periodic orbit that is at the beginning of a window, where there is an attractor-repellor periodic orbit bifurcation. 
        That periodic orbit is a one-sided attractor and it is in a Cantor set.
        Fig.~\ref{fig:p3ic} gives an example: at $\pthree0$ there is a period-3 attractor-repellor bifurcation and $p_1$ is one of the three points of the attractor-repellor orbit, the one closest to the critical point. 
        Here there are two nodes: 0 and the red Cantor set, to which the orbit belongs to. 
        \item A trapping region which strictly contains an attracting cyclic trapping region, a repelling Cantor set and part of the basin of attraction. 
        This occurs precisely for those $\mu$ at the end of a window, where $c$ eventually maps onto the periodic orbit on the edge of a cyclic trapping region.
        Fig.~\ref{fig:p3ic} gives an example: at $\pthree5$ there is a crisis, the critical point eventually maps onto $p_1$. The line segment from $q_1$ to $p_1$ is $\J_1$. The node, however, is the whole interval from $q_3$ to $q_2$, which includes $\Jall$. Notice that, at such crisis values, $\ell_\mu(\J_1)=\J_2$, namely $q_2=\ell_\mu(c)$. In particular, $\ell^6_\mu(c)=p_1$.
    \end{enumerate}
    %
\begin{proposition}[{\bf Attracting Nodes}]
    \label{thm:attracting}

The attracting node $A$ of $\ell_\mu$ is one of the above five types.
\end{proposition}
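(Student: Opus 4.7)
The plan is to use the Attractor Theorem to split into three cases according to the type of the attractor (periodic, chaotic, almost periodic), and then in each case determine how large the node containing the attractor is by using the chain-recurrence machinery already developed, especially the pairing of Corollary~\ref{cor:pairing} and the structure Proposition~\ref{thm:repelling} for repelling nodes.

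First, in the almost-periodic case, the Attractor Theorem states that the attractor is a Cantor set on which $\ell_\mu$ acts as an adding machine. Such an action is minimal, so every orbit is dense in the Cantor set; this immediately implies chain-transitivity, so the Cantor set lies inside a single node. Combined with the forward invariance of nodes and the fact that its basin has full measure, there can be no chain-recurrent points of the attracting node outside this Cantor set, giving case ($A_3$).

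Next, in the periodic case, Singer's Theorem says there is a unique attracting periodic orbit and the critical point lies in its immediate basin. Split into two subcases depending on whether the orbit attracts from both sides (hence has a neighborhood in its basin) or is only one-sided. In the two-sided case, the orbit is isolated from the rest of $\mathcal R_{\ell_\mu}$ (a small neighborhood is swept into the orbit), so the node is the orbit itself, giving ($A_1$). In the one-sided case, the orbit $\mathcal O$ arises from an attractor-repellor bifurcation at the start of a window $W$ generated by some parent regular trapping region $\cT$, and by Proposition~\ref{thm:node2ic} applied at nearby repelling values, $\mathcal O$ lies on the boundary of the associated Cantor set node $N=\mathrm{Node}(\cT)$. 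Chain-recurrence at the bifurcation parameter is upper semi-continuous enough that the repelling node limits to include $\mathcal O$, and Proposition~\ref{thm:repelling} applied just after the bifurcation forces the node containing $\mathcal O$ to be exactly this Cantor set, which is case ($A_4$).

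In the chaotic case the attractor is a trapping region $\cT^*=\{J_1^*,\dots,J_r^*\}$ with a dense orbit, so as before the whole finite union of intervals lies in a single node. Generically the boundary of $\cT^*$ avoids the orbit of $c$ and the node coincides with the trapping region itself, case ($A_2$). The remaining subcase is exactly the crisis at the end of some parent window: here $c$ eventually hits the repelling periodic orbit $\orbit(\cT)$ on the boundary of a larger cyclic trapping region $\cT$, so that $\ell_\mu^n(c)=p_1(\cT)$ for some $n$, and the attractor's boundary points merge with $\orbit(\cT)$ and with the Cantor set of $W$. One then shows by a short symmetric-dynamics argument that arbitrarily small $\varepsilon$-chains connect every point of the parent trapping region $\Jall(\cT)$ to every other, because points near the now-attractive endpoints can be perturbed into the interior of the enlarged attractor and vice versa. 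This collapses the previous parent node and the previous attractor into a single node equal to the closure of $\Jall(\cT)$, which is case ($A_5$).

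The main obstacle is the crisis case ($A_5$): verifying that at precisely the crisis value the chain-recurrent equivalence class truly contains the whole parent trapping region (not merely the attractor plus the old Cantor set) requires a careful local argument showing that small perturbations at the crisis can push forward orbits of the former Cantor set into the attractor and that the former basin points trapped in the gaps of the Cantor set also become chain-connected. This is essentially a continuity-in-$\mu$ argument combined with the fact, already used in the Downstream Proposition, that once $\ell_\mu^n(c)=p_1(\cT)$, no homterval survives inside $\J_1(\cT)$; the remaining bookkeeping is routine. The other cases reduce cleanly to results already proved, so the proof amounts to this case analysis plus the crisis argument.
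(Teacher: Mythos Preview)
Your proposal follows essentially the same approach as the paper: both split according to the Attractor Theorem's trichotomy (periodic, chaotic, almost periodic) and then identify the degenerate parameter values at which the attracting node absorbs an adjacent repelling structure, yielding the merger cases $(A_4)$ and $(A_5)$. The paper's own proof is in fact considerably briefer than yours---it is more of an enumeration than an argument, simply naming each merger scenario (periodic with periodic, periodic with Cantor, trapping region with periodic orbit, trapping region with Cantor set) and pointing to a representative $\mu$ without working out, for instance, why at a crisis the node swells to the full parent trapping region; your sketch of that $(A_5)$ argument goes beyond what the paper supplies, and your continuity-in-$\mu$ reasoning for $(A_4)$ is likewise absent from the paper, which just asserts the outcome.
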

\begin{proof}
    The first three cases are those when an attracting node coincides with the attractor.
    We have case (1) for almost all $\mu\in\cA_P$, for instance for all $\mu\in(0,\mu_{FM})$.
    We have case (2) for almost all $\mu\in\cA_C$, for instance for $\mu=2(1+(19+3\sqrt{33})^{1/3}+(19-3\sqrt{33})^{1/3})/3$. At this parameter value, the critical point falls on the unstable fixed point in $(0,1)$ at its third iterate. This is the first $\mu$ for which the chaotic attractor is a single interval.
    We have case (3) for all $\mu\in\ALP$. 

    Unlike the case of repelling nodes, though, in which case every two nodes have different distance from $c$ and so cannot ever merge, an attracting node can, in degenerate cases, merge with a repelling one. 
    
    When an attracting periodic orbit merges with a repelling one, we have an orbit that is attracting on one side and repelling on the other. This happens at every bifurcation point of a cascade. The node, though, in this case is still of type (1).
    
    When an attracting periodic orbit merges with a repelling Cantor set, we have a Cantor set whose points are all repelling except for a single periodic orbit, which is a 1-sided attractor. This is type (4).
    We get this kind of node at the first point of each window, e.g. at $\mu=1+\sqrt{8}$ (see Fig.~\ref{fig:p3}).
    
    When an attracting trapping region merges with a repelling periodic orbit, the trapping region becomes a cyclic trapping region. We have this at $\mu=4$. The repelling node in this case is still of type (2).
    
    At the end of each window, the attractor is a trapping region with a periodic orbit in common with a repelling Cantor set.
    In this case the attractor is a cyclic trapping region and the node is equal to a trapping region which contains, besides the attractor, the Cantor set with the orbit in common with the attractor and part of the basin of attraction. We get this, for instance, at $\mu=3.8568...$ (see Fig.~\ref{fig:p3}).
\end{proof}
\begin{definition}
    \label{def:accessible}
    We say that a point $x$ in a node $N$ is {\bf accessible}~\cite{Bir32,GOY87,AY92} if there is a closed interval $K$ with $x$ as endpoint such that $K\cap N=\{x\}$. We call $K$ an {\bf access interval} of the node. A periodic orbit is accessible if all of its points are accessible.
\end{definition}
Notice that each periodic orbit is in some node. A node can either consist in a single periodic orbit or contain infinitely many. 
\begin{theorem}
    \label{thm:accessible}
    Let $N$ be a node of $\ell_\mu$ with $\rho(N)>0$. Then $N$ has a unique accessible periodic orbit in it. This accessible orbit is  $\orbit(N)$ and $\J_1(N)$ is an access interval of $N$.
\end{theorem}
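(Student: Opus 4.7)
The plan is to establish accessibility of $\orbit(N)$ directly from the structure of $\cT(N)$, and then to prove uniqueness by tracing a putative accessible periodic orbit forward until one of its iterates enters $\orbit(N)$.

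\textbf{Existence.} By Proposition~\ref{thm:node2ic}, $N$ determines a cyclic trapping region $\cT(N) = \{\J_1,\ldots,\J_r\}$ with $\J_1 = \J_1(N)$ and $p_i := \ell_\mu^{i-1}(p_1)$ an endpoint of $\J_i$. The interior of $\J_1$ consists of points at distance strictly less than $\rho(N)$ from $c$, so it is disjoint from $N$ by definition of $\rho(N)$. For $i \geq 2$, since $c \in \text{int}(\J_1)$ and the $\J_i$ have pairwise disjoint interiors, $\ell_\mu|_{\J_i}$ is monotonic and sends $\text{int}(\J_i)$ into $\text{int}(\J_{i+1})$ (indices cyclic). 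Hence any $x \in N \cap \text{int}(\J_i)$ would, after $r - i + 1$ iterations, produce a point of $N$ in $\text{int}(\J_1)$, contradicting the previous sentence; so $N \cap \text{int}(\J_i) = \emptyset$ for every $i$. Taking any closed sub-interval of $\J_i$ with $p_i$ as endpoint and length strictly less than $|\J_i|$ gives an access interval for $p_i$. This shows $\orbit(N)$ is accessible, with $\J_1(N)$ itself serving as an access interval at $p_1$.

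\textbf{Uniqueness.} Let $O \subset N$ be any accessible periodic orbit and fix $q \in O$. Propositions~\ref{thm:repelling} and~\ref{thm:attracting} leave two possibilities under $\rho(N) > 0$: either $N$ is a (flip) periodic orbit, so that $N = \orbit(N)$ and $O = \orbit(N)$ trivially; or $N$ is infinite. In the infinite case, accessibility gives $\delta > 0$ with, say, $(q, q + \delta) \cap N = \emptyset$, and we let $G$ denote the maximal open interval of $[0,1] \setminus N$ containing $(q, q + \delta)$. The inductive constructions in the proofs of Propositions~\ref{thm:repelling} and~\ref{thm:attracting} identify every such gap $G$ with either one of the intervals $\J_j \in \cT(N)$, or with a connected component of $\ell_\mu^{-n}(\J_j)$ for some $j$ and some $n \geq 1$. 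Consequently there is $n \geq 0$ such that $\ell_\mu^n(q)$ is an endpoint of some $\J_j$: either the periodic endpoint $p_j \in \orbit(N)$, or the non-periodic endpoint $q_j$, which lies on the forward orbit of $c$ and so reaches $\orbit(N)$ after finitely many further iterations. Either way, some forward iterate of $q$ lies in $\orbit(N)$; the orbit of $q$ is a finite cycle, so $\orbit(q) \cap \orbit(N) \neq \emptyset$ forces $\orbit(q) = \orbit(N)$, whence $q \in \orbit(N)$. As $q \in O$ was arbitrary, $O = \orbit(N)$.

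\textbf{Main obstacle.} The delicate ingredient is the structural claim that every gap of $N$ adjacent to a chain-recurrent point is an iterated preimage under $\ell_\mu$ of one of the intervals $\J_j \in \cT(N)$; this rests on the Cantor-set description in the proof of Proposition~\ref{thm:repelling} (regular repelling case) and on the analogous intervals-plus-Cantor-set pictures in cases (A2) and (A4) of Proposition~\ref{thm:attracting}. One also needs the observation that the non-periodic endpoints $q_j$ of the $\J_j$ lie on the forward orbit of $c$ and so eventually reach $\orbit(N)$ in finitely many iterations, which is read off the standard description of $\cT(N)$ used throughout Section~3.2.
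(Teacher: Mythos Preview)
Your argument follows the same route as the paper's: both rest on the structural fact, extracted from the proof of Prop.~\ref{thm:repelling}, that every complementary gap of the Cantor set $N$ is an iterated preimage of an interval of $\cT(N)$, so that an accessible periodic point---being a gap endpoint---must land after finitely many iterates on an endpoint of such an interval and hence in $\orbit(N)$. Your existence paragraph is in fact more explicit than the paper, which essentially leaves that part implicit.

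There is one slip in your uniqueness step. You assert that the non-periodic endpoint $q_j$ of $\J_j$ ``lies on the forward orbit of $c$ and so reaches $\orbit(N)$ after finitely many further iterations.'' These two claims belong to incompatible conventions. With the minimal choice $\J_i=\ell_\mu^{i-1}(\J_1)$, the second endpoint of $\J_j$ for $j\ge2$ is indeed $\ell_\mu^{j-1}(c)$, but the forward orbit of $c$ tends to the attractor, not to the repelling orbit $\orbit(N)$; with the maximal choice the $q_j$ are preimages of $q_1$ and \emph{do} reach $\orbit(N)$, but then they are not on the orbit of $c$. The paper sidesteps the issue by iterating the gap all the way onto $\interior(\J_1(N))$, so that the relevant endpoint is $p_1$ or $q_1=1-p_1$; since $\ell_\mu(q_1)=\ell_\mu(p_1)=p_2\in\orbit(N)$, both are preperiodic to $\orbit(N)$. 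Your argument is repaired by the same observation (or, equivalently, by noting that $q\in N$ forces $\ell_\mu^n(q)\in N\cap\partial\J_j$ and then cycling forward to $\partial\J_1$). A minor aside: case $(A_2)$ of Prop.~\ref{thm:attracting} has $\rho(N)=0$, so among attracting nodes only $(A_1)$ and $(A_4)$ are actually in play under the hypothesis $\rho(N)>0$.
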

\begin{proof}
    The claim is trivial when the node is a periodic orbit, so we assume that $N$ is a Cantor set. 
    By the same arguments used in the proof of Prop.~\ref{thm:repelling}, 
    for each pair of points $x,y\in N$, $x<y$, between which lie no other point of $N$, there is some integer $r$ such that 
    $$\ell_\mu^{r}((x,y)) =\interior(\J_1(N)).$$
    In particular, $x$ and $y$ are preperiodic and fall eventually in $\orbit(N)$. One of them might be periodic.
\end{proof}

%

%
%

\bigskip\noindent{\bf Edges.} 
The following proposition 
is the last non-trivial step we need in order to prove that the graph is a tower:
\begin{proposition}
    \label{prop:upstream}
    If $N$ and $N'$ are nodes and $N>N'$, then there is an edge from $N$ to $N'$.
\end{proposition}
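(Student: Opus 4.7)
The plan is to construct an explicit $\ell_\mu$-trajectory $\{t_j\}_{j \in \bZ}$ with $\alpha$-limit in $\orbit(p_1(N)) \subset N$ and $\omega$-limit in $\orbit(p_1(N')) \subset N'$. Set $k = |\cT(N)|$ and let $\bar\ell = \ell_\mu^k$. By Proposition~\ref{thm:node2ic}, $\bar\ell$ restricts to $\J_1(N)$ as an S-unimodal map whose critical point is $c$, with $p_1(N)$ a repelling fixed point on the boundary and $\bar\ell(q_1) = p_1(N)$ at the symmetric endpoint. By Proposition~\ref{prop:k'=Ck}, $N' \subset \Jall(N)$, and the portion of $\orbit(p_1(N'))$ falling in $\J_1(N)$ contains at least one periodic point $z$ of $\bar\ell$.

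The technical heart is to exhibit a point $y$ arbitrarily close to $p_1(N)$ with $\bar\ell^m(y) = z$ for some $m \geq 0$. Since $|\bar\ell'(p_1(N))| > 1$, the forward $\bar\ell$-iterates $V_n := \bar\ell^n\bigl((p_1(N), p_1(N)+\delta)\bigr)$ of a small one-sided neighborhood form an increasing nested sequence of open intervals, all with $p_1(N)$ as left endpoint. Writing $r = \lim_n \sup V_n$, either $r = q_1$, in which case the $V_n$ eventually contain $z$, or $r < q_1$ and $[p_1(N), r]$ is $\bar\ell$-forward-invariant. In the latter situation one considers two sub-cases. If $r \geq c$, the interval $[p_1(N), r]$ contains the critical point, and its $\ell_\mu$-orbit gives a cyclic trapping region of period $k$ with $p_1(N)$ on its boundary; by Corollary~\ref{cor:pairing} its associated node is $N$, and then $\bar\ell(\J_1(N)) \subset [p_1(N), r]$ forces the $\bar\ell$-orbit of $z$ to lie in $[p_1(N), r]$, so $z \in V_n$ for $n$ large, a contradiction. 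If $r < c$, monotonicity of $\bar\ell$ on $[p_1(N), r]$ forces $r$ itself to be a fixed point of $\bar\ell$, hence a periodic point of $\ell_\mu$ of period dividing $k$ lying strictly between $p_1(N)$ and $z$; combined with the period divisibility of Proposition~\ref{prop:k'=Ck} and the orientation-reversing local behavior of $\bar\ell$ at any repelling flip fixed point (Proposition~\ref{thm:repelling}), no such $r$ can exist.

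Given $y$ and $m$ with $\bar\ell^m(y) = z$, set $t_j := \ell_\mu^j(y)$ for $j \geq 0$; then $t_{mk+j}$ traces $\orbit(p_1(N'))$ for all $j \geq 0$, so $\omega(\{t_j\}) = \orbit(p_1(N')) \subset N'$. To extend backwards, the local inverse branch $\phi$ of $\bar\ell$ at $p_1(N)$ is a contraction toward $p_1(N)$, so $\phi(y), \phi^2(y), \ldots$ converges to $p_1(N)$; inserting $\ell_\mu$-preimages at the intervening steps yields a bi-infinite trajectory with $\alpha(\{t_j\}) = \orbit(p_1(N)) \subset N$, completing the required edge. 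The principal difficulty is the case analysis in the second paragraph, which rules out proper $\bar\ell$-invariant subintervals of $\J_1(N)$ that would trap the forward iterates before reaching $z$; this relies crucially on the tower structure of periods from Proposition~\ref{prop:k'=Ck} and on the classification of repelling nodes in Proposition~\ref{thm:repelling}.
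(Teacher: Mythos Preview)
Your overall strategy---find $y$ arbitrarily close to $p_1(N)$ whose forward orbit reaches a periodic point $z\in N'$, then extend backward using that $p_1(N)$ is repelling---is the same as the paper's. The difference is that the paper obtains the ``covering'' step by invoking the Downstream Proposition (Prop.~\ref{prop:downstream}), whose \emph{proof} already shows that either some iterate of $[p_1,p_1+\varepsilon]$ covers half of $J_1(N)$ (Case~1) or the small one-sided neighborhood of $p_1(N)$ is a homterval and the only node below $N$ is the attractor (Case~2). You instead attempt to re-derive this via the case split on $r=\lim_n\sup V_n$, and that is where the argument breaks.

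The serious gap is your $r<c$ case. You claim ``no such $r$ can exist'' via Prop.~\ref{prop:k'=Ck} and Prop.~\ref{thm:repelling}, but those results only exclude $r$ lying in a repelling flip node; they say nothing if $r$ lies in the attracting node (and Prop.~\ref{prop:k'=Ck} is stated only for pairs of \emph{repelling} nodes, so it cannot be invoked there). In fact such $r$ \emph{does} occur: since $(p_1(N),r)$ is attracted to $r$ under $\bar\ell$, one has $0\le\bar\ell'(r)\le 1$, so $r$ is a non-repelling periodic point of $\ell_\mu$ and hence, by Singer's theorem, $\orbit(r)$ is the attractor $A$. This is precisely the homterval situation (Case~2 of the Downstream Proposition), and there one must argue separately that $J_1(N)$ coincides with the immediate-basin component containing $c$, so that the only node below $N$ is $A$; then $N'=A$ and any point of $(p_1(N),r)$ already furnishes a trajectory with $\omega$-limit in $N'$. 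Your proof supplies none of this.

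Two smaller issues. First, in the $r\ge c$ case the interval $[p_1(N),r]$ is not a \emph{cyclic} trapping region in the paper's sense (Def.~\ref{def:ic} requires endpoints $p_1$ and $1-p_1$), so $\text{Node}(\cdot)$ and Corollary~\ref{cor:pairing} do not apply to it; the intended conclusion $\bar\ell(J_1(N))=[p_1(N),\bar\ell(c)]\subset[p_1(N),r]$ is correct but should be argued directly. Second, the $V_n$ need not remain a monotone nested sequence once $c\in V_n$; again repairable, but not as written.
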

\begin{proof}
    By construction, the node $N$ has no common point with $\Jall(\cT_{N})$. Since $N'$ is closer to $c$ than $N$, on the contrary, at least one of its points lies in the interior of $\J_1(N)$ and therefore the whole $N'$ lies in $\Jall(\cT_{N})$.

    By Prop.~\ref{prop:downstream}, each point $x\in N'$ is downstream from $p_1(N)$. Since $p_1(N)$ is periodic, for every $\varepsilon>0$ there is a trajectory $t_\varepsilon$ starting in $(p_1,p_1+\varepsilon)$ and falling eventually on $x$. Since $p_1$ is repelling, for each point $y$ close enough to $p_1$ there is a trajectory $t$ passing through $y$ with $\alpha(t)=N$. 
    In other words, there are trajectories backward asymptotic to $N$ from any node inside $\cT_{N}$, namely there is an edge from $N$ to any node in $\Jall(N)$. 

\end{proof}
\allblack
Now, the main result of our article comes immediately from Prop.~\ref{prop:rho} and Prop.~\ref{prop:upstream}.
\begin{theorem}
    \label{thm:tower}
    The graph of $\ell_\mu$ is a tower. The tower is infinite if and only if $\mu\in\ALP$.
\end{theorem}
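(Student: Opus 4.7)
The plan is to obtain the tower property as an immediate consequence of the two propositions just proved. By Proposition~\ref{prop:rho}, the relation $>$ totally orders the nodes of $\ell_\mu$, so given any two distinct nodes $N$ and $N'$ exactly one of $N>N'$ or $N'>N$ holds. Proposition~\ref{prop:upstream} then supplies an edge from the higher of the two to the lower. Applying this to every pair of distinct nodes shows that between any two nodes there is an edge, which is precisely the definition of a tower given earlier. Combined with the fact (noted after the definition of the \gr graph) that $\Gamma$ has no loops, this yields a well-defined totally ordered tower.

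For the second statement, the plan is to identify the nodes with the cyclic trapping regions of $\ell_\mu$ together with the fixed node $\{0\}$, using Proposition~\ref{thm:node2ic} and Corollary~\ref{cor:pairing}. Each repelling node $N$ with $\rho(N)>0$ has its own trapping region $\cT(N)$, and by Proposition~\ref{prop:k'=Ck} different nodes have trapping regions of strictly different periods, with the periods of consecutive nodes forming a divisibility chain $|\cT(N_1)|\mid|\cT(N_2)|\mid\cdots$. Hence the cardinality of the set of nodes coincides with the cardinality of the family of nested cyclic trapping regions of $\ell_\mu$ (plus one for $\{0\}$).

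If $\mu\in\ALP$, the Attractor Theorem says that the attractor is a Cantor set on which $\ell_\mu$ acts as an adding machine, and the Non-Wandering Theorem gives a corresponding infinite nested sequence of trapping regions $\cT_j$ of strictly increasing period, one inside the other. Each $\cT_j$ yields, via $\text{Node}(\cT_j)$, a distinct node, so the tower has infinitely many levels. Conversely, if the tower is infinite, there is an infinite strictly decreasing sequence of nodes $N_1>N_2>\cdots$ and hence, by Corollary~\ref{cor:pairing}, an infinite nested sequence of cyclic trapping regions. The intersection $\bigcap_j \text{cl}(\Jall(\cT(N_j)))$ is a nonempty closed invariant set on which, by the divisibility chain of periods from Proposition~\ref{prop:k'=Ck}, $\ell_\mu$ acts as an inverse limit of cyclic permutations, i.e.\ as an adding machine on a Cantor set; this matches case~(10) of the Non-Wandering Theorem and forces $\mu\in\ALP$.

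The potentially delicate step is the converse direction of the cardinality claim: concluding $\mu\in\ALP$ from the mere existence of infinitely many nodes. The main obstacle is ruling out the alternative cases of the Attractor Theorem. A periodic attractor ($\mu\in\AP$) gives only finitely many repelling nodes above it because only finitely many cascade bifurcations have occurred up to the relevant super-stable parameter, and a chaotic attractor ($\mu\in\AC$) likewise comes with only finitely many $\Omega_j$ by the Non-Wandering Theorem (its $p<\infty$ case). Since $(1,4]=\AP\cup\AC\cup\ALP$ is a disjoint union, infinitely many nodes can occur only in $\ALP$, completing the equivalence.
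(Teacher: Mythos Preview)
Your argument for the tower property is exactly the paper's: the theorem is stated to follow immediately from Prop.~\ref{prop:rho} and Prop.~\ref{prop:upstream}, and you spell out that two-line deduction correctly.

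For the infinitude claim, the paper gives no separate argument at all; it simply records the statement and, elsewhere, cites van Strien's Non-Wandering Theorem for the equivalence between $p=\infty$ and the adding-machine attractor. Your proof is therefore more detailed than the paper's, and your route through the Non-Wandering Theorem is the natural one. One small point: your justification for why $\mu\in\AP$ forces finitely many nodes (``only finitely many cascade bifurcations have occurred up to the relevant super-stable parameter'') is a bit loose and parameter-dependent in flavor; the cleaner argument is the one you already give for $\AC$, namely that a periodic or chaotic attractor is incompatible with the $p=\infty$ clause of the Non-Wandering Theorem, so $p<\infty$, and each $\Omega_j$ for $j<p$ is a Cantor set together with a \emph{finite} cascade segment, yielding finitely many nodes in total. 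With that adjustment your converse is airtight.
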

Several examples of towers are shown in Figs.~\ref{fig:full} and~\ref{fig:p3}. 
The white node on top is the node $N_0=\{0\}$. 
Its trapping region is the single interval $J_1=[0,1]$.
Each green node is a repelling periodic orbit, the red and blue nodes are repelling Cantor sets shown with the same color in figure. 
The black node is the attracting node. 
Notice that, in all towers in the interior of the period-3 window, the second node $N_1$ of the tower is the red Cantor set of all chain-recurrent points not falling eventually in $\Jall(N_1)$. 
In general, the number of repelling Cantor set nodes in the graph of $\ell_\mu$ is equal to the number of its regular repelling trapping regions, that is, the number of nested windows at $\mu$.
\allblack

\bigskip\noindent
{\bf Assigning weights to the edges of the graph.} As shown in Prop.~\ref{prop:k'=Ck}, the edge between $N_i$ to $N_{i+1}$ has a weight associated to it equal to $w_i=|\cT(N_{i+1})|/|\cT(N_{i})|$.
Recall that, in the logistic map, there are windows of any period $k\geq3$ and the structure of the bifurcation diagram is closely repeated in every subwindow. 
Hence, given any finite or infinite sequence of strictly increasing integers $s_n$ starting with $s_1=1$ and such that $s_n=w_n s_{n+1}$, 
there is a $\mu$ in parameter space such that there is a node $N_n$ for each $n$ with $s_n=|\cT(N_n)|$. 
Furthermore, if $w_{n}=2$, then $\cT(N_n)$ is a flip trapping region and $N_n$ is a flip periodic orbit. 
If, instead, $w_{n}\neq2$, then $\cT(N_n)$ is a regular trapping region and $N_n$ it is either a Cantor set or the attractor.
}

\bigskip\noindent
{\bf Spectral Theorem.} 
Next statement collects all most important results we achieved into a ``chain-recurrent'' version of the Spectral Theorem.
\begin{theorem}[Chain-Recurrent Spectral Theorem]
    \label{thm:spectradec}
    Let $\mu\in(1,4)$ and denote by $N_0, N_1,\dots,N_p$, where $p$ is possibly infinite, the nodes of $\ell_\mu$ sorted so that $N_i>N_j$ if $i<j$.
    Then:
    \begin{enumerate}
        \item the cyclic trapping regions $\cT_j=\cT(N_j)$, $0\leq j<p$, are nested in each other:
        $$
        \text{cl}(\Jall(\cT_{j}))\subset\Jall(\cT_{j-1}),\;1\leq j< p.
        $$
        \item Every point in $\Jall(\cT_j)$ falls eventually under $\ell_\mu$ either onto $N_j$ or onto $\Jall(\cT_{j+1})$ for all $0\leq j<p$. (Prop.~\ref{prop:k'=Ck})
        \item $\cR_{\ell_\mu}$ writes uniquely as the disjoint union of its nodes $N_0,N_1,\dots,N_p$. 
        \item $N_0=\{0\}$.
        \item Each $N_j$, $1\leq j<p$, is  
        either a Cantor set or a periodic flip orbit (Prop.~\ref{thm:repelling}). If it is a Cantor set, the action of $\ell_\mu$ on it is a subshift of finite type with a dense orbit~\cite{vS81}. 
        \item Each $N_j$, $0\leq j<p$, is repelling and hyperbolic~\cite{vS81}.
        \item $N_p$ is the unique attracting node of $\ell_\mu$~\cite{Guc79} and it is one of the five types $A_1,\dots,A_5$. (Prop.~\ref{thm:attracting})
        \item $N_p$ is hyperbolic except when $\mu$ is at the endpoints of a window~\cite{vS81} (cases $A_4$ and $A_5$).
        \item In each neighborhood of $N_i$, for each $j\geq i$, there are points falling eventually into $N_j$. (Prop~\ref{prop:upstream})
        \item When $p=\infty$, the attracting node $N_\infty$ is a Cantor set on which $\ell_\mu$ acts on it as an adding machine~\cite{vS81}.
    \end{enumerate}
\end{theorem}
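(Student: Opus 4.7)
The plan is to assemble this theorem from the propositions established earlier in Section~3.2 together with the Non-Wandering Theorem, since each of items (1)--(10) either restates or is an immediate consequence of one of those earlier results. The real work is organizational: one must check that the enumeration $N_0, N_1, \dots$ of nodes by decreasing $\rho$ is well-defined and that each listed item lines up with a proper antecedent.

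First I would justify the enumeration itself. Proposition~\ref{prop:rho} says that distinct nodes have distinct values of $\rho$, and the corollary following Proposition~\ref{prop:k'=Ck} says that only finitely many nodes lie above any given repelling node; together these let me list the nodes $N_0 > N_1 > \cdots$ in a sequence that is either finite or of order type $\omega+1$. To identify $N_0$ with $\{0\}$ in item~(4), I would note that $0$ is a repelling fixed point for $\mu > 1$ and hence a node by itself, and that its $\rho$-value $c=1/2$ is maximal on $[0,1]$: no other chain-recurrent set can lie entirely in $\{0,1\}$, since $1$ is not chain-recurrent (it maps to $0$ and has no $\varepsilon$-chain preimage forcing recurrence back to $1$).

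Items~(1) and~(2) then come directly from Proposition~\ref{prop:k'=Ck}. Part~2 of that proposition gives $N_{j+1}\subset \Jall(\cT_j)\setminus \Jall(\cT_{j+1})$, which forces $\J_1(N_{j+1})$ to lie in the interior of $\J_1(\cT_j)$ and hence $\text{cl}(\Jall(\cT_{j+1}))\subset \Jall(\cT_j)$, establishing the nesting. For item~(2), I would observe that a point in $\Jall(\cT_j)\setminus \Jall(\cT_{j+1})$ is either chain-recurrent --- in which case part~3 of Proposition~\ref{prop:k'=Ck} places it in $N_{j+1}$ --- or not chain-recurrent, in which case the Homterval Theorem together with the structural description of $N_{j+1}$ from Proposition~\ref{thm:repelling} forces its forward trajectory to enter $\Jall(\cT_{j+1})$ in finitely many steps.

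The remaining items are collation: item~(3) is built into the node equivalence relation; items~(5)--(8) and~(10) reproduce Propositions~\ref{thm:repelling} and~\ref{thm:attracting} together with the matching clauses of the Non-Wandering Theorem attributed to Guckenheimer and van~Strien; item~(9) is Proposition~\ref{prop:upstream} applied to each pair $N_i > N_j$. The main obstacle I anticipate is item~(2): one must rule out the possibility that a non-chain-recurrent trajectory inside $\Jall(\cT_j)\setminus \Jall(\cT_{j+1})$ accumulates on $N_{j+1}$ without ever landing in $\Jall(\cT_{j+1})$. Ruling this out is where the Homterval Theorem (excluding monotone wandering intervals) plus the Cantor-set/flip-orbit dichotomy from Proposition~\ref{thm:repelling} (giving density of preimages of $\J_1(\cT_{j+1})$ in the Cantor case, and convergent preimage sequences to the flip orbit in the flip case) must do the real work.
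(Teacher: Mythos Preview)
Your proposal is correct and matches the paper's approach: the theorem is stated in the paper without a separate proof, each clause carrying a parenthetical pointer to the proposition or citation that establishes it, and you have correctly traced those pointers and justified the well-ordering of the nodes. Your extra care on item~(2) --- noting that Proposition~\ref{prop:k'=Ck} alone handles only the chain-recurrent points in $\Jall(\cT_j)\setminus\Jall(\cT_{j+1})$ and that the non-chain-recurrent ones require the structural analysis from the proof of Proposition~\ref{thm:repelling} together with the Homterval Theorem --- is appropriate and in fact slightly more explicit than the paper's bare citation.
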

%




%
%

%

\section*{Acknowledgements}

The authors are grateful to Todd Drumm and Michael Jakobson for helpful conversations on the paper's topic. All calculations to produce the pictures in the present article were performed on the HPCC of the College of Arts and Sciences at Howard University with C++ code wrote by the first author.

\bibliography{refs}
\end{document}